\newtheorem{teor}{Theorem}[section]
\theoremstyle{definition}
\newtheorem{rk}[teor]{Remark}
\newtheorem{example}[teor]{Example}
\newcommand{\R}{\mbox{${\mathbb R}$}}
\newcommand{\C}{\mbox{${\mathbb C}$}}
\newcommand{\Z}{\mbox{${\mathbb Z}$}}
\newcommand{\w}{\mbox{${\omega}$}}
\newcommand{\cpn}{\mbox{${\mathbb {CP}^n}$}}
\newcommand{\cpi}{\mbox{${\mathbb {CP}^1}$}}
\newcommand{\parcial}[2]{\frac{\partial#1}{\partial#2}}
\DeclareFontFamily{U}{mathx}{\hyphenchar\font45}
\DeclareFontShape{U}{mathx}{m}{n}{
      <5> <6> <7> <8> <9> <10>
      <10.95> <12> <14.4> <17.28> <20.74> <24.88>
      mathx10
      }{}
\DeclareSymbolFont{mathx}{U}{mathx}{m}{n}
\DeclareMathAccent{\widecheck}{0}{mathx}{"71}
\DeclareMathAccent{\wideparen}{0}{mathx}{"75}
\begin{document}

\title{\bf Calabi Quasimorphisms for monotone coadjoint orbits}
\author{Alexander Caviedes Castro}
\email{alexanderc1@post.tau.ac.il}

\date{}

\begin{abstract}
We show the existence of Calabi quasimorphisms on the universal
covering $\widetilde{Ham}(\mathcal{O}_\lambda, \w_\lambda)$ of the
group of Hamiltonian diffeomorphisms of a monotone coadjoint orbit
$\mathcal{O}_\lambda$ of a compact Lie group. We show that this
result follows from positivity results of Gromov-Witten invariants
and the fact that the quantum product of Schubert classes can never
be zero.
\end{abstract}
\maketitle
\section{Introduction}

A quasimorphism of a group $G$ is a function $r:G\to \R$ for which
exists a constant $C\geq 0$ such that
$$
|r(g_1g_2)-r(g_1)-r(g_2)|\leq C \ \ \ \text{ for every }g_1, g_2\in
G.
$$
A quasimorphism $r$ is homogeneous if $r(g^n)=nr(g)$ for all $g\in
G$ and $n\in \Z.$


A result of Banyaga \cite{banyaga} states that the universal
covering $\widetilde{\operatorname{Ham}}(M, \w)$ of the group of
Hamiltonian diffeomorphisms of a closed symplectic manifold $(M,
\w)$ is perfect, i.e, it coincides with its commutator group. As a
consequence $\widetilde{\operatorname{Ham}}(M, \w)$ does not admit
non-trivial homomorphisms to $\R$ when $M$ is closed.


Even under these circumstances, we still may be able to construct
non-trivial quasimorphisms on $\widetilde{\operatorname{Ham}}(M,
\w)$ when $M$ is closed. Entov and Polterovich have constructed
non-trivial quasimorphisms on $\widetilde{\operatorname{Ham}}(M,
\w)$ that in addition satisfy the \textit{Calabi condition} with
techniques coming from Floer and quantum homology \cite{calabi},
\cite{intersection}, \cite{quasistates}, \cite{rigid}. More
precisely, Entov and Polterovich showed that if the quantum homology
algebra $QH_*(M)$ of a closed \textit{spherically monotone}
symplectic manifold $(M, \w)$ contains a field as a direct summand,
then $\widetilde{\operatorname{Ham}}(M, \w)$ admits a homogeneous
Calabi quasimorphism. In addition, Entov and Polterovich show that
these Calabi quasimorphisms have potential applications in several
areas of symplectic topology such as Hofer's geometry,
$C^0$-symplectic topology and Lagrangian intersection theory  (see
e.g. Polterovich and Rosen's book \cite{rosen}).

The quantum homology algebra $QH_*(M)$ contains a field as a direct
summand when $QH_*(M)$ is for instance \textit{semisimple}, i.e.,
$QH_*(M)$ decomposes into direct sum of fields. Examples of
symplectic manifolds with semi-simple quantum homology are
projective spaces (see e.g. Entov and Polterovich \cite{calabi}),
complex grassmannian manifolds, the smooth complex quadric
$\{z_0^2+z_1^2+\cdots + z_n^2=0\}\subset \cpn$ (see e.g. Abrams
\cite{quantumeuler}), toric Fano 2-folds (see e.g. Ostrover and
Tyomkin \cite{ostrover}, Entov and Polterovich \cite{quasistates}),
and generic symplectic toric manifolds (see e.g. Fukaya, Oh, Ohta,
Ohno \cite{fooo}, Ostrover and Tyomkin \cite{ostrover}).

The quantum homology algebra of a symplectic manifold is endowed
with a Frobenius structure. Abrams showed that a finite dimensional
Frobenius algebra is semisimple if and only if its \textit{Euler
class} is invertible. Abrams used this criterion to show that (any
specialization of) the quantum homology algebra of complex
grassmannian manifolds is semisimple \cite{quantumeuler}.

In this paper we focus our attention on coadjoint orbits
$\mathcal{O}_\lambda$ of compact Lie groups. Coadjoint orbits of any
Lie group are endowed with a symplectic form $\w_\lambda$ known as
the Kostant-Kirillov-Souriau form. The quantum cohomology algebra of
a coadjoint orbit is not always semisimple. Chaput, Manivel and
Perrin showed that the quantum homology algebra of the Grassmannian
manifold of isotropic 2-planes in the six dimensional complex space
$\C^6$ is not semisimple. They also showed that (any specialization
of) the quantum homology algebra of (co)minuscule homogeneous spaces
is semisimple \cite{exampleisotropic}.

Inspired by Abrams' result, in this paper we show that  a finite
dimensional Frobenius algebra over a field contains a field as a
direct sum if and only if its \textit{Euler class} is not nilpotent.
We use this criterion to show that the quantum homology algebra of a
coadjoint orbit of a compact Lie group always contains a field as a
direct sum, even when it is not necessarily semisimple, and thus
symplectic monotone coadjoint orbits of compact Lie groups admit
Calabi quasimorphisms.

We can express the quantum Euler class of the quantum algebra of a
coadjoint orbit $\mathcal{O}_\lambda$ of a compact Lie group in
terms of the Schubert basis $\{\sigma_u\}$ of its homology group
$H_*(\mathcal{O}_\lambda)$ as
$$
e_q=\sum_u \sigma_u*\check{\sigma}_{u},
$$
where in the last expression $*$ denotes the quantum product defined
on $QH_*(\mathcal{O}_\lambda)$ and $\{\check{\sigma}_{v}\}$ the
Schubert basis dual to $\{\sigma_u\}.$

The no nilpotency of the the quantum Euler class $e_q=\sum
\sigma_u*\check{\sigma}_{u}$ is a consequence of the following two
results: the first, the positiviy of genus zero Gromov-Witten
invariants whose constrains are Schubert classes (see e.g. Fulton
and Pandharipande \cite{fultonp}); and the second, the fact that the
quantum product of two Schubert classes does not vanish (Fulton and
Woodward \cite{fultonw}).

In the last part of this paper, we estimate from above the
Hofer-Zenhder capacity of a coadjoint orbit of a compact Lie group.
We use an inequality due to G. Lu \cite{Lu} where an upper bound for
the Hofer-Zehnder capacity of a symplectic manifold is estimated in
terms of its Gromov-Witten invariants. G. Lu's inequality \cite{Lu}
together with Fulton and Woodward's results \cite{fultonw} allow us
to express an upper bound for the Hofer-Zehnder capacity of a
coadjoint orbit in terms of its GKM-graph. We show that this
inequality is sharp for regular coadjoint orbits of the unitary
group $U(n).$

\section*{Acknowledgments}

I would like to thank Leonid Polterovich whose question gave the
impulse to write this paper. I also would like to thank Yaron
Ostrover for useful discussions. This research is supported by the
Israel Science Foundation grants $178/13$ and $1380/13.$

\section{Frobenius Algebras and Euler Class}

In this section we review briefly the defition of a Frobenius
algebra and its Euler class. We show that a Frobenius algebra
contains a field factor in its direct sum decomposition if and only
if its Euler class is not nilpotent. Most of the material presented
in this section is adapted from Abrams \cite{abrams},
\cite{quantumeuler}.

Let $K$ be a field of characteristic $0$ and let $(A,\, \cdot\,)$ be
a finite dimensional commutative algebra over $K$ with unity. A
\textbf{Frobenius structure} of $A$ is a linear form $f:A\to K$ such
that the bilinear form
\begin{align*}
\eta:A\times A &\to k \\
(a,b) &\mapsto f(a\cdot b)
\end{align*}
is nondegenerete. The pair $(A, f)$ is called a \textbf{Frobenius
algebra.}

Let $\{e_i\}$ be a basis for $A$ and $\{e_j^*\}$ be the
corresponding dual basis relative to $\eta,$ i.e., the basis such
that
$$
\eta(e_i, e_j^*)=f(e_i\cdot e_j^*)=\delta_{ij},
$$
for all $i$ and $j.$ The \textbf{Euler class} of  $(A, f)$ is the
element
$$
e_{A,f}:=\sum_i e_i\cdot e_i^*
$$
The Euler class $e_{A, f}$ is well defined and independent of the
choice of basis Abrams \cite[Proposition 5]{abrams}. Note that
$e_{A,f}$ is nonzero because
$$
f(e_{A, f}) = (A:K) \in K,
$$
and $K$ has characteristic 0.

Given two Frobenius algebras $(A, f)$ and $(B, g),$ their direct sum
is the pair $(A\oplus B, f\oplus g)$ where $A\oplus B$ denotes the
orthogonal direct sum of algebras whose product is defined
$$
(a_1, b_1)\cdot(a_2, b_2):=(a_1\cdot b_1, a_2\cdot b_2),
$$
and $f\oplus g$ acts by
\begin{align*}
f\oplus g:A\oplus B &\to K \\
a\oplus b &\mapsto f(a)+g(b)
\end{align*}
If $A$ is an arbitrary algebra with direct sum decomposition of
algebras $A=\bigoplus_i A_i,$ then $(A, f)$ is a Frobenius algebra
for some $f$ if and only if for every $i$ there is a $f_i$ such that
$(A_i, f_i)$ is a Frobenius algebra and $(A, f)=\bigoplus_i (A_i,
f_i)$ Abrams \cite{abrams}. Moreover, the Euler class $e_{A,f}$
respects direct sum decomposition of Frobenius algebras, i.e.,
$$
e_{A,f}=\bigoplus_i e_{A_i,f_i}
$$
We say that an algebra is \textbf{indecomposable} if it can not be
written as a direct sum of two nonzero submodules. The Krull-Schmidt
Theorem states that every finite dimensional algebra is a direct sum
of indecomposable algebras. This decomposition is unique up to
reordering of the summands (see e.g. Curtis and Reiner \cite[Theorem
14.5]{curtisreiner}).

For a Frobenius algebra $(A, f)$ we denote its ideal of nilpotents
by $\mathcal{N}(A).$ If the Frobenius algebra $(A,f)$ is
indecomposable then the ideal of nilpotents $\mathcal{N}(A)$
consists of all non-units of $A,$ and the annihilator of
$\mathcal{N}(A)$ is the principal ideal generated by the Euler class
$e_{A, f}$
$$
\operatorname{ann}(\mathcal{N}(A))=A\cdot e_{A,f}=\{a\cdot
e_{A,f}:a\in A\}
$$
(Abrams \cite[Proposition 3.3]{quantumeuler}). If $A$ is
indecomposable  and $\mathcal{N}(A)=0,$ then $A$ contains only units
and $0,$ so $A$ is just a field extension of $K,$ and in particular
the Euler class $e_{A,f}$ is a unit. On the other hand, if $A$ is
indecomposable and contains a nonzero nilpotent element $u,$ then
$u$ is a zero divisor of  $e_{A,f}$ and $e_{A,f}$ is not a unit, and
as a consequence the Euler class $e_{A,f}$ is nilpotent.

\begin{teor}\label{fieldfactor}
A  Frobenius algebra $(A,f)$ contains a field factor in its direct
sum decomposition if and only if its Euler class $e_{A,f}$ is not
nilpotent.
\end{teor}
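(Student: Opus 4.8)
The plan is to use the Krull--Schmidt decomposition into indecomposable Frobenius algebras together with the fact, already established in the excerpt, that the Euler class respects direct sum decompositions. Write $(A,f)=\bigoplus_i (A_i, f_i)$ with each $(A_i,f_i)$ indecomposable, so that $e_{A,f}=\bigoplus_i e_{A_i,f_i}$. Since an element of a finite direct sum of algebras is nilpotent if and only if each of its components is nilpotent, $e_{A,f}$ is not nilpotent if and only if $e_{A_i,f_i}$ is not nilpotent for some index $i$. This reduces the theorem to the indecomposable case, which the excerpt has essentially already dispatched: if $(A_i,f_i)$ is indecomposable, then either $\mathcal{N}(A_i)=0$, in which case $A_i$ is a field extension of $K$ (a field factor) and $e_{A_i,f_i}$ is a unit, hence not nilpotent; or $\mathcal{N}(A_i)\neq 0$, in which case any nonzero nilpotent $u\in A_i$ annihilates $e_{A_i,f_i}$ (since $\operatorname{ann}(\mathcal{N}(A_i))=A_i\cdot e_{A_i,f_i}$ contains $e_{A_i,f_i}$, so $u\cdot e_{A_i,f_i}=0$), forcing $e_{A_i,f_i}$ to be a zero divisor and in fact nilpotent as noted.

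Assembling the two directions: if $(A,f)$ contains a field factor, say $A_{i_0}$ is a field, then $e_{A_{i_0},f_{i_0}}$ is a unit in $A_{i_0}$, so the component of $e_{A,f}$ in $A_{i_0}$ is a unit and in particular not nilpotent; hence $e_{A,f}$ is not nilpotent. Conversely, if no $A_i$ is a field, then each indecomposable summand has a nonzero nilpotent, so by the indecomposable analysis each $e_{A_i,f_i}$ is nilpotent, and therefore $e_{A,f}=\bigoplus_i e_{A_i,f_i}$ is nilpotent. Contrapositively, if $e_{A,f}$ is not nilpotent then some $A_i$ is a field factor.

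I would also make explicit the elementary lemma that in $A_1\oplus\cdots\oplus A_m$ an element $(a_1,\ldots,a_m)$ is nilpotent iff each $a_j$ is: one direction is clear from $(a_1,\ldots,a_m)^N=(a_1^N,\ldots,a_m^N)$, and for the other, if $a_j^{N_j}=0$ then taking $N=\max_j N_j$ kills the whole element. This is the only genuinely new computation, and it is routine. The one point that deserves care — and which I would flag as the main thing to verify rather than a true obstacle — is that the Euler class of an indecomposable Frobenius algebra with nonzero nilpotents is itself nilpotent, not merely a non-unit; the excerpt asserts this, and it follows because in an indecomposable (hence local, as a finite-dimensional commutative algebra over a field) Frobenius algebra the non-units form the ideal $\mathcal{N}(A_i)$, which is nilpotent, so any non-unit is nilpotent. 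With that observation in hand the proof is just the bookkeeping of the two implications above.
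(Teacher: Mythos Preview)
Your proposal is correct and follows essentially the same approach as the paper: both decompose $(A,f)$ into indecomposable Frobenius summands, use that the Euler class respects direct sums, and then invoke the dichotomy (stated just before the theorem) that an indecomposable summand is either a field with unit Euler class or has nilpotent Euler class. Your version is slightly more explicit in justifying the two elementary points---that nilpotency is detected componentwise in a finite direct sum, and that in an indecomposable finite-dimensional commutative algebra the non-units coincide with the nilpotents---but these are exactly the facts the paper is relying on implicitly.
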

\begin{proof}
If there exists a field extension $(\mathcal{F}, g)$ of $K$ such
that $(A, f)=(\mathcal{F}, g)\oplus (A', f')$ for some Frobenius
subalgebra $(A',f')\subset (A, f),$ then
$$
e_{(A, f)}=e_{(\mathcal{F}, g)}\oplus e_{(A', f')}
$$
The Euler class $e_{(\mathcal{F}, g)}$ is a unit and hence the Euler
class $e_{(A, f)}$ can not be nilpotent.

On the other hand, if we decompose the Frobenius algebra $(A,f)$
into a direct sum of indecomposable components
$$
(A,f)=\bigoplus_i (A_i, f_i)
$$
and none of them is a field, then the Euler classes $e_{(A_i, f_i)}$
of the indecomposable components $(A_i, f_i)$ are nilpotents, and
the Euler class
$$
e_{(A, f)}=\bigoplus_i e_{(A_i, f_i)}
$$
is nilpotent.
\end{proof}
\begin{teor}\label{square}
The characteristic element $e_{(A,f)}$ of a Frobenius algebra
$(A,f)$ is nilpotent if and only if $e_{(A,f)}^2=e_{(A,f)}\cdot
e_{(A,f)}=0.$ In particular, a Frobenius algebra $(A,f)$ contains a
field factor in its direct sum decomposition if and only if $e_{(A,
f)}^2\ne 0.$
\end{teor}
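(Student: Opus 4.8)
The plan is to reduce the claim to the case of an \emph{indecomposable} Frobenius algebra. By the Krull--Schmidt theorem we may write $(A,f)=\bigoplus_i(A_i,f_i)$ with each $(A_i,f_i)$ indecomposable, and then $e_{(A,f)}=\bigoplus_i e_{(A_i,f_i)}$, so that $e_{(A,f)}^2=\bigoplus_i e_{(A_i,f_i)}^2$. Since both squaring and nilpotency are detected componentwise in a finite direct sum (an element of a product is nilpotent if and only if each of its components is), $e_{(A,f)}$ is nilpotent exactly when every $e_{(A_i,f_i)}$ is, and $e_{(A,f)}^2=0$ exactly when every $e_{(A_i,f_i)}^2=0$. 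The implication ``$e_{(A,f)}^2=0$ implies $e_{(A,f)}$ nilpotent'' is trivial, so it suffices to show, for each indecomposable component $(A_i,f_i)$ with nilpotent Euler class, that $e_{(A_i,f_i)}^2=0$; and by the discussion preceding Theorem~\ref{fieldfactor}, an indecomposable component has nilpotent Euler class precisely when it is not a field extension of $K$.

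So suppose $(A,f)$ is indecomposable and not a field, and set $e:=e_{(A,f)}$. As recalled before Theorem~\ref{fieldfactor}, $e$ is then nilpotent. In a finite-dimensional algebra a nilpotent element is a non-unit (a unit cannot satisfy $e^k=0$, since that would force $1=0$), and because $(A,f)$ is indecomposable its ideal of nilpotents $\mathcal{N}(A)$ consists of all non-units; hence $e\in\mathcal{N}(A)$. On the other hand, putting $a=1$ in Abrams' identity $\operatorname{ann}(\mathcal{N}(A))=A\cdot e$ shows that $e\in\operatorname{ann}(\mathcal{N}(A))$, i.e. $e$ annihilates every element of $\mathcal{N}(A)$. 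Applying this to the element $e$ of $\mathcal{N}(A)$ itself gives $e\cdot e=0$, that is, $e_{(A,f)}^2=0$, as required.

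Combining the two paragraphs yields the first assertion. The ``in particular'' statement is then immediate from Theorem~\ref{fieldfactor}: $(A,f)$ contains a field factor if and only if $e_{(A,f)}$ is not nilpotent, hence if and only if $e_{(A,f)}^2\neq 0$.

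The computation is short; the one point worth isolating---and really the crux of the argument---is that for an indecomposable non-field Frobenius algebra the Euler class lies \emph{both} in the maximal ideal $\mathcal{N}(A)$ and in its annihilator $A\cdot e_{(A,f)}$, which already forces $e_{(A,f)}^2=0$. Apart from that, the only thing to check is the legitimacy of the componentwise reduction, which is clear since $e_{(A,f)}=\bigoplus_i e_{(A_i,f_i)}$ and multiplication is performed coordinatewise.
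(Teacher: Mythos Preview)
Your argument is correct, but it follows a genuinely different path from the paper's proof. You reduce via Krull--Schmidt to the indecomposable case and then invoke Abrams' identification $\operatorname{ann}(\mathcal{N}(A))=A\cdot e_{(A,f)}$ to conclude that the Euler class, being simultaneously in $\mathcal{N}(A)$ and in its annihilator, squares to zero. The paper instead gives a direct trace argument that avoids both the decomposition and Abrams' Proposition~3.3: one verifies the identity $\operatorname{Trace}(M_\nu)=\eta(e_{(A,f)},\nu)$ for all $\nu\in A$, observes that if $e_{(A,f)}$ is nilpotent then so is $e_{(A,f)}\cdot\mu$ (commutativity), hence $M_{e_{(A,f)}\cdot\mu}$ is a nilpotent operator with zero trace, and therefore $\eta(e_{(A,f)}^2,\mu)=0$ for all $\mu$, forcing $e_{(A,f)}^2=0$ by nondegeneracy of $\eta$. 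The paper's route is shorter and more self-contained, needing only elementary linear algebra rather than the structural input you use; your route, on the other hand, makes explicit the mechanism in the indecomposable case (the Euler class sits both inside the nilradical and inside its annihilator), which is conceptually illuminating. One minor simplification in your write-up: once you assume the Euler class of an indecomposable $(A_i,f_i)$ is nilpotent, you can go straight to $e\in\mathcal{N}(A_i)$ and $e\in\operatorname{ann}(\mathcal{N}(A_i))$ without the detour through ``not a field extension of $K$''.
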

\begin{proof}
For any $\nu\in A$ there is a multiplication operator
\begin{align*}
M_\nu:A\to A\\
a \mapsto a\cdot \nu
\end{align*}
It is not hard to see that
$$
\operatorname{Trace}(M_\nu)=f(e_{(A, f)}\cdot
\nu)=\eta(e_{(A,f)},\nu)
$$
Now if the characteristic element $e_{(A,f)}$ is nilpotent, then for
any $\mu\in A,$ the element $\nu=e_{(A,f)}\cdot \mu $ is nilpotent,
and the corresponding multiplication operator
$M_{\nu}=M_{e_{(A,f)}\cdot\mu}$ on $A$ have vanishing trace, so that
$$
\operatorname{Trace}(M_\nu)=\eta(e_{(A,f)},\nu)=\eta(e_{(A,f)},
e_{(A,f)}\cdot \mu)=\eta(e_{(A,f)}^2, \mu)=0
$$
But $\eta$ is nondegenerate, and thus $e_{(A,f)}^2=0.$
\end{proof}
A Frobenius algebra $(A, f)$ is \textbf{semisimple} if it is a
direct sum of fields. The previous theorem is in contrast with
Abrams' criterion for semisimplicity of Frobenius algebras:

\begin{teor}[Abrams \cite{quantumeuler}]
A Frobenius algebra $(A, f)$ is semisimple if and only its Euler
class $e_{(A, f)}$ is a unit.
\end{teor}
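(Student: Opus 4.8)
The plan is to run the same two-direction argument used for Theorem~\ref{fieldfactor}, now strengthening ``contains a field factor'' to ``is a direct sum of fields'' and ``not nilpotent'' to ``is a unit.'' The two structural facts I will rely on are, first, the additivity of the Euler class over a decomposition into indecomposable Frobenius subalgebras, $e_{(A,f)}=\bigoplus_i e_{(A_i,f_i)}$, and second, the dichotomy recalled just before Theorem~\ref{fieldfactor}: an indecomposable Frobenius algebra is either a field extension of $K$, in which case its Euler class is a unit, or it contains a nonzero nilpotent, in which case its Euler class is nilpotent.

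For the forward implication, I would assume $(A,f)$ semisimple, say $(A,f)=\bigoplus_i(\mathcal{F}_i,f_i)$ with each $\mathcal{F}_i$ a field extension of $K$. Then $e_{(A,f)}=\bigoplus_i e_{(\mathcal{F}_i,f_i)}$, and each $e_{(\mathcal{F}_i,f_i)}$ is nonzero, since $f_i(e_{(\mathcal{F}_i,f_i)})=(\mathcal{F}_i:K)\in K$ and $\operatorname{char}K=0$; being a nonzero element of a field it is invertible. A tuple invertible in every factor is invertible in the product, so $e_{(A,f)}$ is a unit.

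For the converse, I would decompose $(A,f)=\bigoplus_i(A_i,f_i)$ into indecomposable Frobenius algebras, possible by the Krull--Schmidt theorem recalled in the text, so that $e_{(A,f)}=\bigoplus_i e_{(A_i,f_i)}$. If $e_{(A,f)}$ is a unit then so is each $e_{(A_i,f_i)}$ in $A_i$. By the dichotomy above no $A_i$ can carry a nonzero nilpotent, since then $e_{(A_i,f_i)}$ would be nilpotent, and a nilpotent element of a nonzero algebra is never a unit (a nilpotent unit $u$ with $u^n=0$ forces $1=0$). Hence $\mathcal{N}(A_i)=0$ for all $i$, each $A_i$ is a field extension of $K$, and $(A,f)$ is a direct sum of fields, i.e.\ semisimple.

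I do not expect a genuine obstacle here: the real work — the structure of indecomposable Frobenius algebras and the behaviour of the Euler class under direct sums — has already been carried out above, so what remains is bookkeeping. The single point deserving explicit mention is the elementary remark that a nilpotent element of a nonzero algebra cannot be a unit; this is exactly what converts ``some non-field indecomposable component has nilpotent Euler class'' into ``$e_{(A,f)}$ is not a unit'' and thereby closes the converse.
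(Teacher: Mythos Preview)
Your argument is correct. Note, however, that the paper does not supply its own proof of this theorem: it is stated as Abrams' result with a citation and no proof environment follows. What you have written is precisely the natural argument from the machinery the paper has already assembled---the additivity $e_{(A,f)}=\bigoplus_i e_{(A_i,f_i)}$ and the indecomposable dichotomy (field with unit Euler class versus nonzero nilpotents with nilpotent Euler class)---so your proof is exactly in the spirit of the surrounding section and would fit seamlessly were a proof to be inserted.
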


\begin{example}
Let $f_1, \cdots, f_n \in \C[x_1, \cdots, x_n]$ be such that the
quotient ring
$$
A=\C[x_1, \cdots, x_n]/\langle f_1, \cdots, f_n \rangle,
$$
is finite dimensional. The Local Duality Theorem states that a
Frobenius structure for $A$ is given by the linear functional
\begin{align*}
f:A&\to \C \\
F \to \int_{|f_1|=\cdots=|f_n|=1} &\dfrac{F(x_1, \ldots,
x_n)}{\prod_{i=1}^nf_i(x_1, \ldots, x_n)}dx_1\ldots dx_n
\end{align*}
(see e.g. Griffiths and Harris \cite[Chapter 5]{GriffithsHarris}).
The Euler class of $(A, f)$ can be written as
$$
e_{A,f}=u\cdot
\det\Bigl(\parcial{f_i}{x_j}\Bigr)\operatorname{mod}{\langle f_1,
\cdots, f_n \rangle}
$$
for some unit $u$ of $A$ (Abrams \cite[Proposition 6.3]{abrams}).

Let $Z=\{\lambda\in \C^n: f_1(\lambda)=\ldots=f_n(\lambda)=0\}.$ The
Euler class $e_{(A, f)}$ is a unit in $A$ if and only if
$\det\Bigl(\parcial{f_i}{x_j}\Bigr)(\lambda)\ne 0$ for all $\lambda
\in Z.$ Likewise, the Euler class $e_{(A, f)}$ is nilpotent in $A$
if and only if $\det\Bigl(\parcial{f_i}{x_j}\Bigr)(\lambda)=0$ for
all $\lambda \in Z.$ So in conclusion,
\begin{enumerate}

\item The Frobenius algebra $(A, f)$ is semisimple if and only if the
multiplicity of any zero $\lambda\in Z$ is one.

\item The Frobenius algebra $(A, f)$ contains a field factor in its direct sum decomposition if and only if
there exists a zero $\lambda\in Z$ whose multiplicity is one.

\end{enumerate}

\end{example}



\section{Quantum Homology}

In this section we recall the definition of the quantum homology of
a monotone spherically symplectic manifold. A standard reference of
this material is McDuff and Salamon's book \cite{mcduff}.

Let $(M^{2n}, \w)$ be a closed symplectic manifold. The symplectic
manifold $(M^{2n}, \w)$ is spherically monotone if there exists a
real constant $\kappa>0$ such that for all $A\in \pi_2(M)$
$$
c_1(T_M)(A)=\kappa\w(A).
$$
Here $c_1(T_M)$ denotes the first Chern class of the bundle $(TM,
J),$ where $J$ is any almost complex structure compatible with $\w.$

Let $\bar{\pi}_2(M)=\pi_2(M)/\sim,$ where $A\sim B$ if and only if
$\w(A)=\w(B).$ If we assume that $\w$ does not vanish on $\pi_2(M),$
the group $\bar{\pi}_2(M)$ is an infinite cyclic group, and it has a
generator $S$ so $\w(S)>0.$ The integer number $N:=c_1(T_M)(S)$ is
called the \textbf{minimal Chern number} of the monotone symplectic
manifold $(M, \w).$

Let $A, B, C \in H_*(M, \Z)$  be three homology classes. Let $J$ be
a regular almost structure compatible with the symplectic form $\w.$
We denote by $\operatorname{GW}_k(A, B, C)$ the Gromov-Witten
invariant that, roughly speaking, counts the number of
$J$-holomorphic maps $\cpi \to M$ representing the class $kS\in
\bar{\pi}_2(M)$ and passing through generic representatives of the
homology classes $A, B$ and $C.$ The Gromov-Witten invariant
$\operatorname{GW}_k(A, B, C)$ is zero unless
$$
\deg{A}+\deg{B}+\deg{C}=4n-2kN
$$
Let $\Lambda$ be the field of formal Laurent series whose principal
part is a finite sum
\[
\Lambda:=\C[q]]=\left \{
  \begin{tabular}{c}
  $\sum_{k\in \mathbb{Z}}z_kq^k: z_k\in \C,$ and there exists \\$l\in \Z$
   such that  $z_k=0$ for all $k\leq l$
  \end{tabular}
\right \}
\]
As a vector space over $\C$ the \textbf{quantum homology algebra} of
$(M, \w)$ is defined to be
$$
QH_*(M):=H_*(M)\otimes_{\mathbb{C}}\Lambda
$$
The quantum multiplication $*$ on $QH_*(M)$ is defined as follows:
for $A, B \in H_*(M)$ and $k\in \Z_{\geq 1}$ define $(A*B)_k\in
H_*(M)$ as the unique class such that
$$
(A*B)_k\circ C=\operatorname{GW}_k(A, B, C)
$$
for all $C\in H_*(M).$ Here $\circ$ stands for the ordinary
intersection index in homology. Now for any $A, B \in H_*(M)$ set
$$
A*B=A\cap B +\sum_{k\in \mathbb{Z}_{\geq 1}}(A*B)_k\otimes q^k \in
QH_*(M)
$$
By $\Lambda$-linearity, we can extend the quantum product to the
whole $QH_*(M).$ We define a grading on $QH_*(M)$ so
$$
\deg{(A\otimes q^k)}:=\deg{A}-2kN
$$
for any $A\in H_*(M),$ and hence $\deg(a*b)=\deg(a)+\deg(b)-2n$ for
any $a, b\in QH_*(M).$

The quantum product defined on $QH_*(M)$ is skew-commutative,
associative and with unity equal to the fundamental class $[M\,].$
The even part
$QH_{\operatorname{ev}}(M):=H_{\operatorname{ev}}(M)\otimes_{\mathbb{C}}\Lambda$
is a commutative subalgebra of $QH_*(M).$

The algebra $QH_{\operatorname{ev}}(M)$ is a Frobenius algebra over
the Field $\Lambda$ with Frobenius algebra structure $f$ that
associates to a quantum homology class its coefficient at the
fundamental class of a point $[\operatorname{pt}]\in H_0(M)$
\begin{align*}
f: QH_{\operatorname{ev}}(M)&\to \Lambda\\
\sum_{A\in H_*(M)} A\otimes P_A(q)&\mapsto P_{[\operatorname{pt}]}
\end{align*}
The pairing
\begin{align*}
\eta:QH_{\operatorname{ev}}(M)&\times QH_{\operatorname{ev}}(M) \to
\Lambda \\
(a, b\,) &\mapsto f(a*b)
\end{align*}
is non-degenerate and $\eta(a*b,c)=\eta(a, b*c)$ for all $a,b,c\in
QH_{\operatorname{ev}}(M).$

\section{Calabi quasimorphisms and Quantum Homology}

In this section we remind the reader the relationship between
quantum homology and Calabi quasimorphisms. The material presented
in this section is mostly based on Entov and Polterovich's papers
\cite{calabi}, \cite{intersection}, \cite{quasistates},
\cite{rigid}.

Let $(M^{2n}, \w)$ be a closed connected symplectic manifold and
$I\subset \R$ be an interval containing $0.$ Given a smooth
Hamiltonian function $H:I\times M \to \R,$ set $H_t:=H(t, \cdot).$
We can associate the time dependent vector field $X_{H_t}$ defined
by
$$
\iota_{X_{H_t}}\w=dH_t
$$
We denote by $\phi_H^t$ the flow generated by $X_{H_t}.$
A Hamiltonian $H(x,t)$ of the closed symplectic manifold $(M, \w)$
is \textbf{normalized} if
$$
\int_M H_t\w=0  \text{ for all  } t\in I
$$
The group of \textbf{Hamiltonian diffeomorphisms} are the time maps
of Hamiltonian flows generated by normalized Hamiltonians
$$
\operatorname{Ham}(M, \w):=\{\phi_H^t: H \text{ (normalized)
Hamiltonian}\}
$$
Let $\widetilde{\operatorname{Ham}}(M, \w)$ be the universal
covering of $\operatorname{Ham}(M, \w)$
\begin{align*}
\widetilde{\operatorname{Ham}}(M, \w)&=\{(\phi, [\alpha]):\phi\in
\operatorname{Ham}(M, \w), \alpha=\{\alpha_t\}_{t\in [0,1]} \text{
is a smooth} \\ &\text{path of Hamiltonian diffeomorphisms with
}\alpha_0=I, \alpha_1=\phi\},
\end{align*}
here $[\alpha]$ stands for the homotopy class of $\alpha$ with fixed
endpoints.

For a non-empty open subset $U$ of $M,$ we denote by
$\widetilde{\operatorname{Ham}}_U(M, \w)$ the subgroup of
$\widetilde{\operatorname{Ham}}(M, \w)$ consisting of all elements
that can be represented by a path $\{\phi_H^t\}_{t\in [0,1]}$
starting at the identity and generated by a Hamiltonian function
$H_t$ supported in $U$ for all $t.$ Consider the map
\begin{align*}
\operatorname{Cal}_U&:\widetilde{\operatorname{Ham}}_U(M, \w) \to
\R\\
\phi&\mapsto \int_0^1\int_MH_t\w^n
\end{align*}
This map is well defined, i.e., it is independent of the choice of
the Hamiltonian functions generating $\phi.$ It is a group
homomorphism called the \textbf{Calabi homomorphism} (see e.g.
Polterovich and Rosen \cite[Chapter 4]{rosen}).

A non-empty subset $U$ of $M$ is called Hamiltonian displeceable if
there exists a Hamiltonian diffeomorphism $\phi\in
\operatorname{Ham}(M, \w)$ such that $\phi(U)\cap
\operatorname{Closure}(U)=\emptyset.$

A quasimorphism $\mu:\widetilde{\operatorname{Ham}}(M, \w)$ is
called a \textbf{Calabi quasimorphism} if it satisfies the following
two properties:

\begin{enumerate}

\item The map $\mu:\widetilde{\operatorname{Ham}}(M, \w)\to \R$ coincides with the Calabi homomorphism
$\operatorname{Cal}_U:\widetilde{\operatorname{Ham}}_U(M, \w)\to \R$
on any open and Hamiltonian displaceable set $U.$

\item For normalized Hamiltonians $F, G:M\times I\to \R$
$$
\int \min_M(F_t-G_t)dt \leq
\dfrac{\mu(\phi_G)-\mu(\phi_F)}{\operatorname{Vol}(M, \w)}\leq \int
\max_M(F_t-G_t)dt
$$

\end{enumerate}

Entov and Polterovich in \cite{calabi} have constructed
quasimorphisms on $\widetilde{\operatorname{Ham}}(M, \w)$ in terms
of \textbf{spectral invariants}. These invariants are given by a map
$$
c:QH_{\operatorname{ev}}(M)\times \widetilde{\operatorname{Ham}}(M,
\w) \to \R
$$
We refer the reader to Entov and Polterovich \cite{calabi}, Oh
\cite{Oh}, Polterovich and Rosen \cite{rosen}, Schwarz
\cite{schwarz}, Usher \cite{usher}, etc. for more details about
spectral invariants and their properties.

\begin{teor}[Entov and Polterovich \cite{calabi}]\label{entovpolterovich}
Let $(M, \w)$ be a closed monotone symplectic manifold. Assume that
the quantum homology algebra $QH_{\operatorname{ev}}(M)$ splits as
an algebra as $\mathcal{F}\oplus \mathcal{R},$ where $\mathcal{F}$
is a field. Let $e$ be the unit of $\mathcal{F}$ and
\begin{align*}
c_e:\widetilde{\operatorname{Ham}}(M, \w) &\to \R \\
\phi &\mapsto c(e, \phi)
\end{align*}
be the spectral invariant associated to $e.$ Then, the function
$c_e:\widetilde{\operatorname{Ham}}(M, \w) \to \R$ is a
quasimorphism, and the homogenization
$\mu:\widetilde{\operatorname{Ham}}(M, \w) \to \R$ of $c_e$ given by
$$
\mu(\phi)=\lim_{m\to \infty}\dfrac{c_e(\phi^m)}{m}
$$
is a homogeneous Calabi quasimorphism.
\end{teor}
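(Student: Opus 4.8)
The plan is to reproduce Entov and Polterovich's argument from \cite{calabi}, which rests entirely on the formal properties of the spectral invariants
$c:QH_{\operatorname{ev}}(M)\times\widetilde{\operatorname{Ham}}(M,\w)\to\R$ together with the single algebraic observation that, $e$ being the unit of the field factor in $QH_{\operatorname{ev}}(M)=\mathcal{F}\oplus\mathcal{R}$, it is an idempotent, $e*e=e$, and moreover $[M\,]*e=e$ (since $[M\,]=e+e'$ with $e'$ the unit of $\mathcal{R}$). Throughout one uses the standard properties of $c$, available in clean form on a monotone $(M,\w)$ (see \cite{calabi}, \cite{Oh}, \cite{schwarz}, \cite{usher}): the triangle inequality $c(a*b,\tilde\phi\tilde\psi)\ge c(a,\tilde\phi)+c(b,\tilde\psi)$; the Hofer--Lipschitz estimate bounding $c(a,\tilde\phi_G)-c(a,\tilde\phi_F)$ between $\operatorname{Vol}(M,\w)\int_0^1\min_M(G_t-F_t)\,dt$ and $\operatorname{Vol}(M,\w)\int_0^1\max_M(G_t-F_t)\,dt$; spectrality; and the non-negativity $\gamma(\tilde\phi):=c([M\,],\tilde\phi)+c([M\,],\tilde\phi^{-1})\ge 0$ of the spectral norm.

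First I would verify that $c_e$ is a quasimorphism. The triangle inequality with $a=b=e$ and $e*e=e$ gives the subadditivity
$$c_e(\tilde\phi\tilde\psi)\ \ge\ c_e(\tilde\phi)+c_e(\tilde\psi),$$
and feeding the pair $(\tilde\phi\tilde\psi,\tilde\psi^{-1})$ into it yields $c_e(\tilde\phi)\ge c_e(\tilde\phi\tilde\psi)+c_e(\tilde\psi^{-1})$, i.e. $c_e(\tilde\phi\tilde\psi)\le c_e(\tilde\phi)-c_e(\tilde\psi^{-1})$. To obtain the reverse quasi-inequality I need a uniform two-sided bound on $c_e(\tilde\psi)+c_e(\tilde\psi^{-1})$. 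The upper bound $c_e(\tilde\psi)+c_e(\tilde\psi^{-1})\le c(e,\widetilde{\operatorname{id}})$ is the triangle inequality with $a=b=e$ applied to $\tilde\psi\tilde\psi^{-1}=\widetilde{\operatorname{id}}$; for the lower bound, apply the triangle inequality with $a=[M\,]$, $b=e$ (so $[M\,]*e=e$) to get $c_e(\tilde\psi)\ge c([M\,],\tilde\psi)+c(e,\widetilde{\operatorname{id}})$ and likewise for $\tilde\psi^{-1}$, then add and use $\gamma(\tilde\psi)\ge 0$ and $c(e,\widetilde{\operatorname{id}})\le 0$ (the $a=b=e$, $\tilde\phi=\tilde\psi=\widetilde{\operatorname{id}}$ case of triangle):
$$c_e(\tilde\psi)+c_e(\tilde\psi^{-1})\ \ge\ \gamma(\tilde\psi)+2\,c(e,\widetilde{\operatorname{id}})\ \ge\ 2\,c(e,\widetilde{\operatorname{id}}).$$
Hence $|c_e(\tilde\phi\tilde\psi)-c_e(\tilde\phi)-c_e(\tilde\psi)|\le -2\,c(e,\widetilde{\operatorname{id}})=:D$, so $c_e$ is a quasimorphism with defect at most $D$. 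The sequence $m\mapsto c_e(\tilde\phi^m)$ is then quasi-additive, so $\mu(\tilde\phi)=\lim_m c_e(\tilde\phi^m)/m$ exists by the usual Fekete-type argument and $\mu$ is the unique homogeneous quasimorphism at bounded distance from $c_e$.

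It remains to check the two Calabi conditions for $\mu$. Condition (2) is inherited: $c_e$ itself satisfies the Hofer--Lipschitz bound, and since passing from a Hamiltonian to the $m$-fold reparametrised iterate multiplies both bounding integrals $\int_0^1\min_M(G_t-F_t)\,dt$ and $\int_0^1\max_M(G_t-F_t)\,dt$ by $m$, while $c_e(\tilde\phi^m)/m\to\mu(\tilde\phi)$, dividing by $m$ and letting $m\to\infty$ gives the sandwich inequality for $\mu$. For condition (1), let $U\subset M$ be open and Hamiltonian displaceable and $\tilde\phi\in\widetilde{\operatorname{Ham}}_U(M,\w)$ be generated by a normalized $H$ supported in $U$. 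The crux is the uniform estimate
$$\Bigl|\,c_e(\tilde\phi)-\tfrac{1}{\operatorname{Vol}(M,\w)}\textstyle\int_0^1\!\int_M H_t\,\w^n\Bigr|\ \le\ C(U),$$
with $C(U)$ depending only on $(M,\w)$, on $e$, and on the Hofer length of a path displacing $U$. Granting it, since $\operatorname{Cal}_U$ is a genuine homomorphism one has $\operatorname{Cal}_U(\tilde\phi^m)=m\,\operatorname{Cal}_U(\tilde\phi)$ and $\tilde\phi^m\in\widetilde{\operatorname{Ham}}_U(M,\w)$; applying the estimate to $\tilde\phi^m$, dividing by $m$ and passing to the limit gives $\mu(\tilde\phi)=\operatorname{Cal}_U(\tilde\phi)/\operatorname{Vol}(M,\w)$, which is precisely condition (1).

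The main obstacle is this displaceability estimate. Following \cite{calabi} one proves it by trapping $c_e(\tilde\phi)$ between the spectral invariants of $\tilde\psi\tilde\phi$ and $\tilde\psi\tilde\phi^{-1}$ for a $\tilde\psi$ displacing $U$, using the triangle inequality to decouple the $\tilde\psi$ and $\tilde\phi$ contributions and the Hofer--Lipschitz bound to control the $\tilde\phi$ part by the Calabi integral; the delicate point is the careful bookkeeping of the competing normalizations — the volume normalization built into $c$ against the mean-zero condition on $H$ — and of the additive constants, and that is where essentially all the real work lies. Once this estimate is in hand the theorem follows by the formal manipulations above. (Monotonicity of $(M,\w)$ is used only to have spectral invariants with the stated properties available in clean form, and the field-factor hypothesis is used only to produce the idempotent $e$.)
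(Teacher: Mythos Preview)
The paper does not prove this statement; Theorem~\ref{entovpolterovich} is quoted from Entov and Polterovich \cite{calabi} and used as a black box, so there is nothing here to compare your proposal against.

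Your sketch of the quasimorphism property has a genuine gap. You write the triangle inequality as $c(a*b,\tilde\phi\tilde\psi)\ge c(a,\tilde\phi)+c(b,\tilde\psi)$, but in the conventions of \cite{calabi}, \cite{Oh}, \cite{schwarz}, \cite{usher} it is $\le$, and in fact the two inputs you invoke are incompatible: taking $a=b=[M\,]$ and $(\tilde\phi,\tilde\phi^{-1})$ in a $\ge$-triangle gives $0=c([M\,],\widetilde{\operatorname{id}})\ge\gamma(\tilde\phi)$, contradicting the nonnegativity $\gamma\ge0$ you also assume. With the correct $\le$-triangle, what your two estimates actually yield is $c(e,\widetilde{\operatorname{id}})\le c_e(\tilde\psi)+c_e(\tilde\psi^{-1})\le\gamma(\tilde\psi)+2\,c(e,\widetilde{\operatorname{id}})$, and since $\gamma$ is unbounded above this is no uniform upper bound. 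More tellingly, your derivation uses only that $e$ is idempotent with $[M\,]*e=e$, properties shared by the unit of \emph{any} direct summand (in particular by $[M\,]$ itself), so if it worked the field hypothesis would be superfluous. In Entov and Polterovich's actual argument the hypothesis that $\mathcal{F}$ is a field is used precisely to obtain the missing uniform upper bound: invertibility of every nonzero element of $\mathcal{F}$ forces the induced non-Archimedean valuation on $\mathcal{F}$ to be genuinely additive, and this is what drives their estimate. You also only sketch the Calabi condition~(1), deferring ``essentially all the real work'' of the displaceability estimate, but the quasimorphism gap is the decisive one.
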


\section{Geometry of coadjoint orbits}

In this section we recall some general statements about the geometry
of coadjoint orbits. Most of the material shown here can be found in
the classical literature such as Kirillov \cite{Orbit} for the
geometry of coadjoint orbits and Gelfand, Gelfand and Bernstein
\cite{GelfandBernstein} for the geometry of Schubert varieties.

Let $G$ be a compact Lie group, $\mathfrak{g}$ be its Lie algebra
and $\mathfrak{g}^*$ be the dual of $\mathfrak{g}$. Let $(\cdot\, ,
\cdot)$ denote an Ad-invariant inner product defined on
$\mathfrak{g}.$ We identify the Lie algebra $\mathfrak{g}$ and its
dual $\mathfrak{g}^*$ via this inner product. Let $\lambda\in
\mathfrak{g}^*$ and $\mathcal{O}_\lambda\subset \mathfrak{g}^*$ be
the coadjoint orbit passing through $\lambda.$ Let $\w_\lambda$ be
the \textbf{Kostant-Kirillov-Souriau form} defined on
$\mathcal{O}_\lambda$ by
$$
\w_{\lambda}(\hat{X}, \hat{Y})=\langle\lambda, [X, Y] \rangle \ \ \
X, Y \in \mathfrak{g},
$$
where $\hat{X}, \hat{Y}$ are the vector fields on $\mathfrak{g}^*$
generated by the coadjoint action of $G.$ The form $\w_\lambda$ is
closed and non-degenerate thus defining a symplectic structure on
$\mathcal{O}_\lambda.$

We denote by $G_{\mathbb{C}}$ the complexification of the Lie group
$G.$ Let $P\subset G_{\mathbb{C}}$ be a parabolic subgroup of
$G_{\mathbb{C}}$ such that $\mathcal{O}_\lambda \cong
G_{\mathbb{C}}/P.$ The quotient of complex Lie groups
$G_{\mathbb{C}}/P$ allows us to endow $\mathcal{O}_\lambda$ with a
complex structure $J$ compatible with $\w_\lambda$ so the triple
$(\mathcal{O}_\lambda, \w_\lambda, J)$ is a K\"ahler manifold. The
almost complex structure $J$ is regular in the sense of McDuff and
Salamon \cite[Proposition 7.4.3]{mcduff}.

Let $T\subset G$ be a maximal torus and let $B\subset
G_{\mathbb{C}}$ be a Borel subgroup with $T_{\mathbb{C}}\subset
B\subset P,$ where $T_{\mathbb{C}}$ denotes the complexification of
the maximal torus $T\subset G.$ Let $R \subset \mathfrak{t}^*$ be
the root system of $T$ in $G.$ Let $R^{+}\subset R$ be a system of
positive roots with simple roots $S \subset R^+.$ Let $W=N_{G}(T)/T$
be the Weyl group of $G.$ For every root $\alpha\in R,$ let
$s_\alpha \in W$ be the reflection associated to it. Recall that the
length $l(w)$ of $w\in W$ is defined as the minimum number of simple
reflections $s_\alpha\in W, \alpha \in S, $ whose product is $w.$
For the parabolic subgroup $P\subset G_{\mathbb{C}},$ let
$W_P=N_P(T)/T$ be the Weyl group of $P$ and $S_P\subset S$ be the
subset of simple roots whose corresponding reflections are in $W_P.$

Let $w_0$ be the longest element in $W$ and let
$B^{op}:=w_0Bw_0\subset G_{\mathbb{C}}$ be the \textbf{Borel
subgroup opposite} to $B.$ For $w\in W/W_P,$ let
$X(w):=\overline{BwP/P} \subset G_{\mathbb{C}}/P$ and
$Y(w):=\overline{B^{op}wP/P} \subset G_{\mathbb{C}}/P$ be the
\textbf{Schubert variety} and the \textbf{opposite Schubert variety}
associated with $w,$ respectively. We denote by $\sigma_w$ and
$\check{\sigma}_{w}$ the fundamental classes in the homology group
$H_*(G_{\mathbb{C}}/P, \Z)$ of $X(w)$ and $Y(w),$ respectively. Note
that $\check{\sigma}_{w}=\sigma_{w_ow}.$ For $w\in W/W_P,$ we let
$\check{w}:=w_0w\in W/W_P$ so
$\check{\sigma}_{w}=\sigma_{\check{w}}.$ The set of Schubert classes
$\{\sigma_w\}_{w\in W/W_P}$ forms a free $\Z$-basis of
$H_*(G_{\mathbb{C}}/P, \Z),$ and the set of Schubert classes
$\{\check{\sigma}_{w}\}_{w\in W/W_P}$ is the dual basis of
$\{\sigma_w\}_{w\in W/W_P}$ with respect to the intersection pairing
\begin{align*}
H_{*}(G_{\mathbb{C}}/P, \Z)&\otimes H_{*}(G_{\mathbb{C}}/P, \Z) \to \Z\\
(A, B)&\mapsto \int_{G_{\mathbb{C}}/P} A\cap B
\end{align*}
Now we explain when a coadjoint orbit $\mathcal{O}_\lambda$ is
symplectically monotone with respect to the Kostant-Kirillov-Souriau
form $\w_\lambda.$ Each root $\alpha \in R$ has a coroot
$\check{\alpha}\in \mathfrak{t}.$ The coroot $\check{\alpha}$ is
identified with $\frac{2\alpha}{(\alpha, \alpha)}\in \mathfrak{t}$
via the invariant inner product $(\cdot\,,\cdot).$ The system of
coroots is the set $\check{R}=\{\check{\alpha}:\alpha \in R\}$ and
the simple coroots is the set $\check{S}=\{\check{\alpha}:\alpha \in
S\}.$ For $\alpha\in R,$ let $\varpi_{\alpha}\in \mathfrak{t}^*$
denote the fundamental weight defined by
$$
\langle\varpi_\alpha, \check{\beta}\rangle=\delta_{\alpha, \beta},
$$
for any $\beta\in R.$ Here $\langle\cdot\, ,\cdot\, \rangle$ denotes
the standard pairing $\langle\cdot\, ,
\cdot\,\rangle:\mathfrak{t}\otimes \mathfrak{t}^* \to \R.$

The cohomology group $H^2(G_{\mathbb{C}}/P; \Z)$ can be identified
with the span
$$
\Z\{\varpi_\alpha:\alpha\in S\backslash S_P\}
$$
and the homology group $H_2(G_{\mathbb{C}}/P; \Z)$ with the quotient
$$
\Z\check{S}/\Z\check{S}_P
$$
For each $\alpha \in S \backslash S_P,$ we identify the class
$\sigma_{s_\alpha}\in H_2(G_{\mathbb{C}}/P, \Z)$ with
$\check{\alpha}+\Z\check{S}_P\in \Z\check{S}/\Z\check{S}_P$ and we
identify its Poincar\'e dual
$\operatorname{PD}(\sigma_{s_\alpha})\in H^2(G_{\mathbb{C}}/P, \Z)$
$H^2(G_{\mathbb{C}}/P, \Z)$ with $\varpi_\beta.$ When $\lambda \in
\mathfrak{t}^*,$ the cohomology class of the
Kostant-Kirillov-Souriau form $[\w_\lambda] \in
H^2(G_{\mathbb{C}}/P, \Z)$ is identified with $\lambda \in
\R\{\varpi_\alpha:\alpha\in S\backslash S_P\}\subset
\R\{\varpi_\alpha:\alpha\in S\}=\mathfrak{t}^*.$ The Poincar\'e
pairing $H_2(G_{\mathbb{C}}/P, \Z)\otimes H^2(G_{\mathbb{C}}/P,
\Z)\to \Z$ is compatible with the standard pairing $\langle\cdot\, ,
\cdot\,\rangle:\mathfrak{t}\otimes \mathfrak{t}^* \to \R.$ For two
degrees $c, d\in H_2(G_{\mathbb{C}}/P, \Z),$ we say that $c\leq d,$
if $c-d =\sum_{\alpha\in S\backslash S_P}b_\alpha\sigma_{s_\alpha}$
and $b_\alpha \in \Z_{\geq 0}$ for all $\alpha\in S\backslash S_P.$

Under these identifications,
$$
c_1(T_{G_{\mathbb{C}}/P})=\sum_{\gamma \in R^{+}\backslash
R^+_P}\gamma\in \Z\{\varpi_\alpha:\alpha\in S\backslash S_P\} \cong
H^2(G_{\mathbb{C}}/P, \Z)
$$
A coadjoint orbit $(\mathcal{O}_\lambda, \w_\lambda)\cong
(G_{\mathbb{C}}/P, \w_\lambda)$ is symplectically monotone if there
exists a real constant $\kappa>0$ such that for all
$\check{\alpha}\in \check{S}\backslash \check{S}_P$
$$
n_{\alpha}:=\int_{\sigma_{s_\alpha}} c_1(T_{G_{\mathbb{C}}/P})=
\Big\langle \sum_{\gamma \in R^{+}\backslash R^+_P}\gamma\, ,
\check{\alpha} \,\, \Big\rangle=\kappa\langle \lambda,
\check{\alpha} \rangle=\kappa\w_\lambda(\sigma_{s_\alpha}),
$$
or simply,
$$
\lambda=\dfrac{1}{\kappa}\sum_{\gamma \in R^{+}\backslash
R^+_P}\gamma
$$
The minimal Chern number of the coadjoint orbit
$\mathcal{O}_\lambda$ is given by
$$
N:=\gcd_{\alpha\in S\backslash S_P}n_\alpha.
$$
\section{Quantum hohomology of $G_{\mathbb{C}}/P$}

Now we define the quantum homology of $\mathcal{O}_\lambda\cong
G_{\mathbb{C}}/P.$ For a degree $d=\sum_{\alpha\in S\backslash S_P}
d_\alpha \sigma_{s_\alpha}\in H_2(G_{\mathbb{C}}/P, \Z)$ and $u, v,
w\in W/W_P,$ let $N_{u, v}^w(d\,)$ be the Gromov-Witten invariant
$\operatorname{GW}_d(\sigma_u, \sigma_v, \check{\sigma}_{w})$ that
is equal to the number of morphisms $\mu:\cpi \to X$ of degree $d$
such that for three given distinct points $p_1, p_2, p_3\in \cpi$
and three general $g_1, g_2, g_3\in G_{\mathbb{C}},$ $\mu(p_1)$ is
in $g_1\cdot X(u),$ $\mu(p_2)$ is in $g_2\cdot X(v),$ and $\mu(p_3)$
is in $g_3\cdot Y(w)$, in particular the Gromov-Witten invariants
$N_{u,v}^w(d)$ are nonnegative integer numbers (see e.g. Fulton and
Pandharipande \cite[Lemma 14]{fultonp}).

Let $\Lambda=\C[q]]$ be the field of Laurent series with complex
coefficients and finite principal part. Recall that the quantum
homology ring $QH_*(G_{\mathbb{C}}/P)$ is as $\Lambda$-module the
tensor product
$$
H_*(G_{\mathbb{C}}/P, \Z)\otimes_{\mathbb{C}}\Lambda,
$$
so the Schuber classes $\{\sigma_w=\sigma_w\otimes 1\}_{w\in W/W_P}$
form a basis for $QH_*(G_{\mathbb{C}}/P)$ over $\Lambda.$ The
quantum product $*$ is defined by
$$
\sigma_u* \sigma_v=\sigma_u\cap \sigma_v+\sum_{k\in \mathbb{Z}_{\geq
1}} (\sigma_u* \sigma_v)_k q^k$$ where
$$
(\sigma_u* \sigma_v)_k=\sum_{w\in W/W_P}
\Bigl(\sum_{\sum_{\alpha}d_\alpha n_\alpha=kN}
N_{u,v}^w(d\,)\Bigr)\sigma_w
$$
Let
$$
f:QH_*(G_{\mathbb{C}}/P)=H_*(G_{\mathbb{C}}/P)\otimes \Lambda \to
\Lambda
$$
be the map defined by
$$
f\Bigl(\sum_{u\in W/W_P} \sigma_u\otimes P_u(q)\Bigr)=P_e(q)
$$
The bilinear map
\begin{align*}
\eta: QH_*(G_{\mathbb{C}}/P)&\otimes QH_*(G_{\mathbb{C}}/P)\to \Lambda\\
(u, v)&\mapsto f(u*v)
\end{align*}
defines a Frobenius algebra structure on $QH_*(G_{\mathbb{C}}/P)$
over the field $\Lambda.$
\begin{teor}
For $G_{\mathbb{C}}/P,$ let $\sigma_w\in H_*(G_{\mathbb{C}}/P, \Z)$
be the Schubert class associated with $w\in W/W_P.$ The quantum
Euler class of the Frobenius algebra $(QH_*(G_{\mathbb{C}}/P), f)$
is equal to
$$
e_{q}=\sum_{w\in W/W_P}\sigma_w*\check{\sigma}_{w}\in
QH_*(G_{\mathbb{C}}/P),
$$
where $\check{\sigma}_{w}\in H_*(G_{\mathbb{C}}/P, \Z)$ denotes the
Schubert class opposite to $\sigma_w.$
\end{teor}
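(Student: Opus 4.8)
The plan is to recognize $\{\check{\sigma}_{w}\}_{w\in W/W_P}$ as the $\Lambda$-basis of $QH_*(G_{\mathbb{C}}/P)$ dual to the Schubert basis $\{\sigma_w\}_{w\in W/W_P}$ with respect to the Frobenius pairing $\eta(a,b)=f(a*b)$, and then to read off the Euler class from the basis-independent formula $e_{(A,f)}=\sum_i e_i\cdot e_i^*$ recalled earlier. Indeed, once we establish $\eta(\sigma_u,\check{\sigma}_{v})=\delta_{uv}$ for all $u,v\in W/W_P$, applying that formula with $\{e_i\}=\{\sigma_w\}$, $\{e_i^*\}=\{\check{\sigma}_{w}\}$ and algebra product $*$ yields $e_q=\sum_{w\in W/W_P}\sigma_w*\check{\sigma}_{w}$ at once; so the whole content of the proof is this orthonormality statement.

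To prove it, first unwind what $f$ does: since $X(e)=\overline{BeP/P}=\{eP\}$, the class $\sigma_e$ is the point class $[\operatorname{pt}]\in H_0(G_{\mathbb{C}}/P)$, so $f$ extracts the coefficient of $\sigma_e$; dually, $\check{\sigma}_{e}=\sigma_{w_0}$ is the class of $Y(e)=\overline{B^{op}P/P}=G_{\mathbb{C}}/P$, i.e. the fundamental class, which is the unit of the quantum product $*$. Now expand $\sigma_u*\check{\sigma}_{v}=\sigma_u\cap\check{\sigma}_{v}+\sum_{k\geq 1}(\sigma_u*\check{\sigma}_{v})_k\,q^k$ and compute the $\sigma_e$-coefficient of each term. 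For the classical part, $f$ restricted to $H_*(G_{\mathbb{C}}/P)$ is integration over $G_{\mathbb{C}}/P$, so the $\sigma_e$-coefficient of $\sigma_u\cap\check{\sigma}_{v}$ is $\int_{G_{\mathbb{C}}/P}\sigma_u\cap\check{\sigma}_{v}=\delta_{uv}$, which is precisely the defining property of the opposite Schubert basis (and when $\deg\sigma_u\neq\deg\sigma_v$ this product has no $H_0$ component, matching $\delta_{uv}=0$). For the quantum corrections with $k\geq 1$, pairing $(\sigma_u*\check{\sigma}_{v})_k$ with $\check{\sigma}_{e}$ via the intersection index and using the definition of the quantum structure constants gives the $\sigma_e$-coefficient as $\sum_{d:\,\sum_\alpha d_\alpha n_\alpha=kN}\operatorname{GW}_d(\sigma_u,\check{\sigma}_{v},[G_{\mathbb{C}}/P])$; every such $d$ is nonzero, and a genus-zero Gromov--Witten invariant with a fundamental-class insertion vanishes in nonzero degree, so all these terms are $0$. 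Hence $\eta(\sigma_u,\check{\sigma}_{v})=f(\sigma_u*\check{\sigma}_{v})=\delta_{uv}$, which is what we wanted.

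The one place where something beyond bookkeeping is used is the vanishing $\operatorname{GW}_d(\sigma_u,\check{\sigma}_{v},[G_{\mathbb{C}}/P])=0$ for $d\neq 0$: this rests on identifying $\check{\sigma}_{e}$ with the fundamental class and then invoking the fundamental-class axiom of genus-zero Gromov--Witten theory --- equivalently, the standard fact (see McDuff--Salamon \cite{mcduff}) that the Frobenius pairing on $QH_*(G_{\mathbb{C}}/P)$ restricts to the classical Poincar\'e pairing on $H_*(G_{\mathbb{C}}/P)$ with no quantum corrections. Everything else is a matter of unwinding the definitions of $f$, $\eta$, the quantum product, and the Schubert bases $\{\sigma_w\}$, $\{\check{\sigma}_{w}\}$.
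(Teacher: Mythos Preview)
Your proof is correct and follows essentially the same approach as the paper: both reduce the claim to showing $\eta(\sigma_u,\check{\sigma}_{v})=\delta_{uv}$ by splitting into the classical intersection pairing (giving $\delta_{uv}$) and the quantum corrections (which vanish because $\operatorname{GW}_d(\sigma_u,\check{\sigma}_{v},[G_{\mathbb{C}}/P])=0$ for $d\neq 0$, via the fundamental-class axiom, with the same reference to McDuff--Salamon). Your write-up is a bit more explicit about why $\check{\sigma}_e$ is the fundamental class and how the $\sigma_e$-coefficient is extracted, but the argument is the same.
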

\begin{proof}
The Schubert classes $\{\sigma_w\}_{w\in W/W_P}$ form a basis for
$QH_*(G_{\mathbb{C}}/P)$ over $\Lambda.$ We just have to check that
the dual basis of $\sigma_w$ with respect to the Frobenius pairing
$\eta$ is $\check{\sigma}_{w},$ or equivalently
$$
\eta(\sigma_u, \check{\sigma}_{v})=f(\sigma_u * \check{\sigma}_{v})
=\delta_{uv}
$$
for all $u, v\in W/W_P.$

The basis $\{\check{\sigma}_{u}\}$ is dual to the basis
$\{\sigma_{u}\}$ with respect to the intersection pairing defined on
$H_*(G/P, \Z),$ and as consequence
$$
f(\sigma_u\cap \check{\sigma}_{v})=\delta_{uv}
$$
On the other hand, for a nonzero degree $d,$ the Gromov-Witten
invariant
$$
\operatorname{GW}_d(\sigma_u, \check{\sigma}_{v},
\check{\sigma}_{e})=\operatorname{GW}_d(\sigma_u,
\check{\sigma}_{v}, [G/P])=N_{u, \check{v}}^{e}(d)
$$
is zero (see e.g. McDuff and Salamon \cite[Exercise 7.1.6]{mcduff}),
therefore
$$
\eta(\sigma_u, \check{\sigma}_{v})=f(\sigma_u
*\check{\sigma}_{v})=\sum_{\sum_{\beta}d_\beta n_\beta=kN
}q^kN_{u,\check{v}}^e(d)=f(\sigma_u\cap
\check{\sigma}_{v})=\delta_{uv},
$$
and we are done.
\end{proof}

\begin{example}[Quantum Euler Class of $G(2,4)$]
Let $G(2, 4)$ be the complex Grassmannian manifold of 2-planes in
$\C^4.$ Shubert cycles in $H_*(G(2, 4))$ are parametrized by
elements in the quotient set $S_4/(S_2\times S_2).$ Elements in the
quotient set $S_4/(S_2\times S_2)$ are in one to one correspondence
with the partitions contained in the rectangle of size $2\times 2,$
i.e., the partitions
$$
\{0, 1, (1,1), 2, (2,1), (2,2)\}.
$$
We will denote the corresponding Schubert cycles in the homology
ring $H_*(G(2, 4))$ by
$$
\{\sigma_0, \sigma_{1}, \sigma_{(1, 1)}, \sigma_{2}, \sigma_{(2,1)},
\sigma_{(2, 2)}\}.
$$
In our convention, for a partition $\lambda$
$$
\dim_{\mathbb{C}}(\sigma_\lambda)=\dim_{\mathbb{C}}G(2,
4)-|\lambda|,
$$
where $|\lambda|$ denotes the size of the partition $\lambda.$

As a $\Lambda$-module
$$
QH_*(G(2; 4))\cong \operatorname{span}_{\Lambda}\langle \sigma_0,
\sigma_{1}, \sigma_{(1, 1)}, \sigma_{2}, \sigma_{(2,1)}, \sigma_{(2,
2)} \rangle
$$
The special Schubert classes $\{\sigma_{0}, \sigma_1, \sigma_2\}$
generate $QH_*(G(2;4))$ as a ring. Giambelli's formula together with
Pieri's formula determine all the multiplication of Schubert classes
in the Grassmannian manifold (see e.g. Bertram \cite{bertram}). We
summarize the quantum multiplications of Schubert classes in the
following table
\begin{center}
\begin{tabular}{|c|c c c c c c|}\hline
* & 1 & $\sigma_{1}$ & $\sigma_{2}$ & $\sigma_{(1\,,1)}$  & $\sigma_{(2\,,1)}$ &
$\sigma_{(2\,,2)}$\\ \hline
1 & 1 & $\sigma_{1}$ & $\sigma_{2}$ & $\sigma_{(1\,,1)}$  & $\sigma_{(2\,,1)}$ & $\sigma_{(2\,,2)}$\\
$\sigma_{1}$ & $\sigma_{1}$ & $\sigma_{2}+\sigma_{(1\,,1)}$ & $\sigma_{(2\,,1)}$ & $\sigma_{(2\,,1)}$  & $\sigma_{(2\,,2)}+q$ & $q\sigma_{1}$\\
$\sigma_{2}$ & $\sigma_{2}$ & $\sigma_{(2\,,1)}$ & $\sigma_{(2\,,2)}$ & $q$  & $q\sigma_{1}$ & $q\sigma_{(1\,,1)}$\\
$\sigma_{(1\,,1)}$ & $\sigma_{(1\,,1)}$ & $\sigma_{(2\,,1)}$ & $q$ & $\sigma_{(2\,,2)}$  & $q\sigma_{1}$ & $q\sigma_{2}$\\
$\sigma_{(2\,,1)}$ & $\sigma_{(2\,,1)}$ & $\sigma_{(2\,,2)}+q$ & $q\sigma_{1}$ & $q\sigma_{1}$  & $q\sigma_{2}+q\sigma_{(1\,,1)}$ & $q\sigma_{(2\,,1)}$\\
$\sigma_{(2\,,2)}$ & $\sigma_{(2\,,2)}$ & $q\sigma_{1}$ &
$q\sigma_{(1\,,1)}$ & $q\sigma_{2}$ & $q\sigma_{(2\,,1)}$ & $q^2$\\
\hline
\end{tabular}
\end{center}
The Quantum Euler class $e_q$ of $QH_*(G(2, 4))$ is equal to
\begin{align*}
e_q&=2\sigma_{(2,2)}*1+2\sigma_{1}* \sigma_{(2,1)}+ \sigma_{(1,1)}*
\sigma_{(1,1)}+\sigma_{2}*\sigma_{2}\\&=6 \sigma_{(2,2)}+2q
\end{align*}
The Quantum Euler class $e_q$ has inverse $\dfrac{3}{16q^2}
\sigma_{(2\,,2)}-\dfrac{1}{16q},$ and, in particular, the quantum
cohomology ring $QH_*(G(2, 4))$ is semisimple. For arbitrary
Grassmannian manifolds $G(k, n)$, Abrams has shown in \cite{abrams}
that the quantum homology ring $QH_*(G(k, n))$ (after specialization
of the quantum parameters) is semisimple by proving that the quantum
Euler class of $QH_*(G(k, n))$ is invertible.
\end{example}
The following is an example due to Chaput, Manivel and Perrin
\cite{exampleisotropic} of a homogeneous space $G_{\mathbb{C}}/P$
whose quantum cohomology ring $QH^*(G_{\mathbb{C}}/P)$ is not
semisimple.

\begin{example}[Quantum Euler class of $IG(2, 6)$]
Let us endow $\C^6$ with a symplectic form $\Omega.$ We denote by
$Sp(6, \C)$ the symplectic group of invertible linear
transformations on $\C^6$ that preserve $\Omega.$

We denote by $IG(2, 6)$ the isotropic Grassmannian of isotropic
2-planes in $\C^6.$ The dimension of $IG(2, 6)$ is equal to 7.
Schubert cycles in $H_*(IG(2, 6))$ are indexed by 1-strict
partitions $\lambda$ (see e.g. Chaput, Manivel and Perrin
\cite{exampleisotropic}), and in $H_*(IG(2, 6), \Z)$ there are
explicitly 12 of these classes
$$
\{\sigma_0, \sigma_1, \sigma_2, \sigma_{1,1}, \sigma_3,
\sigma_{2,1}, \sigma_4, \sigma_{3,1}, \sigma_{4,1}, \sigma_{3,2},
\sigma_{4,2}, \sigma_{4,3}\},
$$
the degree of the Schubert class $\sigma_{\lambda}$ is equal to
$$
2(\dim_{\mathbb{C}}(IG(2, 6))-(\lambda_1+\lambda_2).
$$
In the figure below, we draw the Hasse diagram of the Schubert
cycles in $H_*(IG(2, 6), \Z)$ ordered by inclusion of the
corresponding Schubert varieties

\begin{center}
 \includegraphics{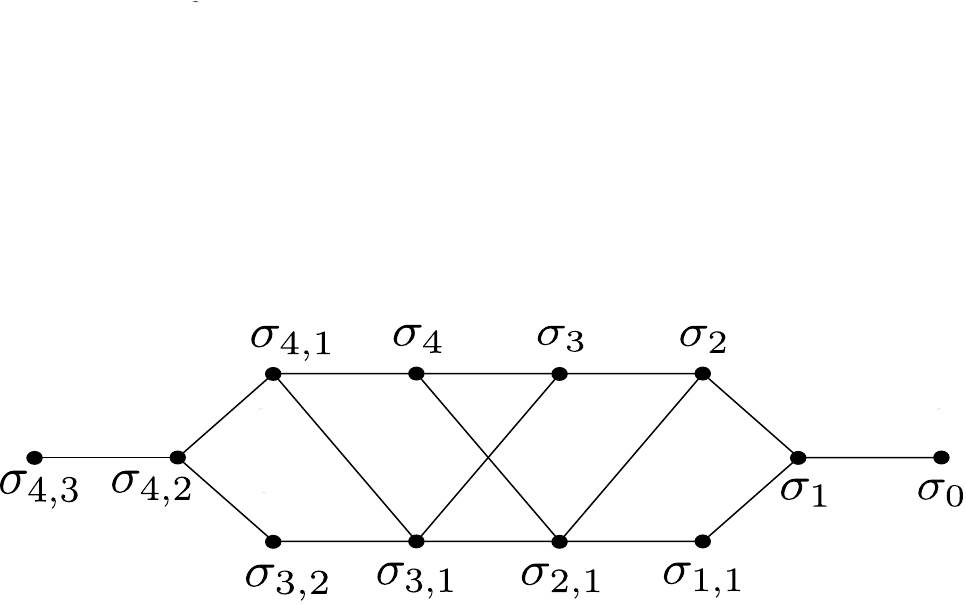}
\end{center}

The following is the multiplication table of the Schubert classes
$\sigma_\lambda$ with $\sigma_1, \sigma_2, \sigma_3$ in
$QH_*(IG(2,6))$ (see e.g. Chaput, Manivel and Perrin
\cite{exampleisotropic}, and Galkin, Mellin and Smirnov
\cite{smirnov})
\begin{center}
\begin{tabular}{|c|c c c c|}\hline
$*$ &  & $\sigma_1$ & $\sigma_2$ & $\sigma_3$ \\ \hline
$\sigma_1$ & $\sigma_1$ & $\sigma_2+\sigma_{1,1}$ & $\sigma_3+\sigma_{2,1}$  & $2\sigma_4+\sigma_{3,1}$ \\
$\sigma_2$ & $\sigma_2$  & $\sigma_3+\sigma_{2,1}$ & $2(\sigma_4+\sigma_{3,1})$& $2\sigma_{4,1}+\sigma_{3,2}+q\sigma_0$ \\
$\sigma_{1,1}$ & $\sigma^2-\sigma_2$ & $\sigma_{2,1}$ & $\sigma_4+\sigma_{3,1}$ & $\sigma_{4,1}+q\sigma_0$ \\
$\sigma_3$ & $\sigma_3$ & $2\sigma_4+\sigma_{3,1}$ & $2\sigma_{4,1}+\sigma_{3,2}+q\sigma_0$ & $2\sigma_{4,2}+q\sigma_1$\\
$\sigma_{2,1}$ &$\sigma_1*\sigma_2-\sigma_3$ & $\sigma_4+2\sigma_{3,1}$ & $2\sigma_{4,1}+\sigma_{3,2}+q\sigma_0$ & $\sigma_{4,2}+2\sigma_1q$\\
$\sigma_4$ & $\sigma_1*\sigma_{2,1}-2\sigma_{3,1}$ & $\sigma_{4,1}+\sigma_0q$ & $\sigma_{4,2}+q\sigma_1$ & $\sigma_{4,3}+q\sigma_2$\\
$\sigma_{3,1}$ & $-\frac{1}{3}\sigma_1*(\sigma_3-2\sigma_{2,1})$ & $\sigma_{4,1} +\sigma_{3,2}$ & $\sigma_{4,2}+q\sigma_1$ & $q(\sigma_2+\sigma_{1,1})$\\
$\sigma_{4,1}$& $\sigma_1*\sigma_4-q$ & $\sigma_{4,2}+q\sigma_1$ & $\sigma_{4,3}+q(\sigma_2+\sigma_{1,1})$ & $q(\sigma_{2,1}+\sigma_3)$\\
$\sigma_{3,2}$ &$\sigma_1*\sigma_{3,1}-\sigma_{4,1}$ & $\sigma_{4,2}$ & $q\sigma_2$ & $q\sigma_{2,1}$\\
$\sigma_{4,2}$ & $\sigma_1*\sigma_{4,1}-q\sigma_1$ & $\sigma_{4,3}+\sigma_2q$ & $q(\sigma_3+\sigma_{2,1})$ & $q(2\sigma_{3,1}+\sigma_4)$ \\
$\sigma_{4,3}$ & $\sigma_1*\sigma_{4,2}-q\sigma_2$ & $\sigma_3q$ &
$q(\sigma_4+\sigma_{3,1})$ & $q(\sigma_{4,1}+\sigma_{3,2})$ \\
\hline
\end{tabular}
\end{center}

The quantum Euler class of $QH_*(IG(2,6))$ equals to
\begin{align*}
e_q&=2(\sigma_0*\sigma_{4,3}+\sigma_1*\sigma_{4,2}+\sigma_2*\sigma_{4,1}+\sigma_{1,1}*\sigma_{3,2}+\sigma_3*\sigma_4+\sigma_{2,1}*\sigma_{3,1})\\
&=2(6\sigma_{4,3}+4\sigma_2q+\sigma_{1,1}q)
\end{align*}
The Euler class $e_q$ is not invertible because it has zero
divisors, for instance
\begin{align*}
e*\Bigl(\sigma_{4,3}-\sigma_2q+\sigma_{1,1}q\Bigr)=0.
\end{align*}
As a consequence, the quantum homology algebra $QH_*(IG(2,6))$ is
not semisimple.
\end{example}
In the next theorem, we show that the quantum homology algebra of
any coadjoint orbit of a compact Lie group contains a field factor
in its direct sum decomposition.
\begin{teor}\label{castro}
The quantum homology algebra $QH_*(G_{\mathbb{C}}/P)$ contains a
field factor in its direct sum decomposition.
\end{teor}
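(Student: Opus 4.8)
The plan is to apply Theorem \ref{square} to the Frobenius algebra $(QH_*(G_{\mathbb{C}}/P), f)$: it suffices to show that the quantum Euler class $e_q = \sum_{w\in W/W_P}\sigma_w*\check{\sigma}_{w}$ satisfies $e_q^2\ne 0$, or even just that $e_q$ itself is not nilpotent. I would work with a fixed $\Z$-basis and analyze the ``leading term'' of $e_q$ with respect to the grading by powers of $q$. Concretely, each summand $\sigma_w*\check{\sigma}_w$ has a classical part $\sigma_w\cap\check{\sigma}_w$, which is either $[\mathrm{pt}]$ (when $w=w$, i.e. always, since $\check{\sigma}_w$ is dual to $\sigma_w$) — so the $q^0$-part of $e_q$ is $\sum_w [\mathrm{pt}]\cdot(\text{something})$; more carefully, $\sigma_w\cap\check{\sigma}_w$ is the class of a point when $\dim\sigma_w + \dim\check{\sigma}_w$ equals $\dim_{\mathbb C}(G_{\mathbb C}/P)$, which holds exactly for $w$ and $\check w = w_0w$ being complementary, and in fact $\sigma_w\cap\check\sigma_w = [\mathrm{pt}]$ for every $w$ by definition of the dual basis. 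Thus the classical (i.e.\ $q^0$) contribution to $e_q$ is $(\dim_\Q H_*) \cdot [\mathrm{pt}]$ plus positive-$q$-power corrections coming from the Gromov-Witten parts $(\sigma_w*\check\sigma_w)_k$ with $k\ge 1$.

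The heart of the argument is positivity. First I would invoke the positivity of genus-zero Gromov-Witten invariants with Schubert-class constraints (Fulton-Pandharipande \cite{fultonp}): every structure constant $N_{u,v}^w(d)$ is a nonnegative integer. This means that when one expands $e_q = \sum_k e_q^{(k)} q^k$ (with $e_q^{(k)}\in H_*(G_{\mathbb C}/P)\otimes\Z$), writing each $e_q^{(k)} = \sum_w c_{w}^{(k)}\sigma_w$ in the Schubert basis, all coefficients $c_w^{(k)}$ are nonnegative integers, and at least one is strictly positive in each nonzero $e_q^{(k)}$. In particular $e_q^{(0)} = (\dim_\Q H_*(G_{\mathbb C}/P))[\mathrm{pt}] + (\text{possibly other Schubert classes with nonneg.\ coeff.})$ — actually $e_q^{(0)} = \sum_w \sigma_w\cap\check\sigma_w = (\dim_\Q H_*)\,[\mathrm{pt}]$ exactly, which is already nonzero. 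Then I would use Fulton-Woodward \cite{fultonw}: the quantum product of any two Schubert classes is never zero in $QH_*(G_{\mathbb C}/P)$. Combined with positivity, this says: if $a = \sum_w a_w\sigma_w$ and $b=\sum_w b_w\sigma_w$ have all coefficients nonnegative and $a,b$ are both nonzero, then $a*b$ is a nonnegative combination of Schubert classes (and of $q$-powers) that is nonzero, because it dominates $a_u b_v\,\sigma_u*\sigma_v$ for any $u,v$ with $a_u,b_v>0$, and this single term is nonzero with nonnegative coefficients, hence no cancellation can kill it.

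From here the conclusion is immediate: $e_q$ is a nonzero nonnegative combination of Schubert classes (tensored with nonnegative powers of $q$), so by induction $e_q^m$ is a nonzero nonnegative combination for every $m\ge 1$ — in particular $e_q^2\ne 0$ — hence $e_q$ is not nilpotent. By Theorem \ref{square}, $(QH_*(G_{\mathbb C}/P),f)$ contains a field factor in its direct-sum decomposition. One should be slightly careful that ``nonnegative combination'' is preserved under the $\Lambda = \C[q]]$-algebra structure, but since all structure constants lie in $\Z_{\ge0}[q]$ this is automatic, and no negative powers of $q$ enter because the GW invariants vanish for $d<0$.

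The main obstacle is making the ``no cancellation'' step rigorous: a priori, $e_q = \sum_w \sigma_w*\check\sigma_w$ could have its positive contributions conspire — but they cannot, precisely because positivity (Fulton-Pandharipande) guarantees that every term in the Schubert-and-$q$ expansion of $e_q$ is nonnegative, so a sum of such terms vanishes only if each vanishes, which is false already for the $q^0$ part $(\dim_\Q H_*)[\mathrm{pt}]$. Thus the only real inputs are the two cited positivity/non-vanishing results, and the argument is essentially the observation that a nonzero element of the ``positive cone'' spanned by $\{\sigma_w q^k\}$ cannot be nilpotent because the cone is closed under $*$ and the cone is ``sharp'' (contains no nonzero element together with its negative).
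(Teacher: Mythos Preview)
Your proposal is correct and follows essentially the same route as the paper: both arguments write $e_q$ as a nonnegative combination of Schubert classes (with nonnegative $q$-powers) via Fulton--Pandharipande positivity, then invoke Fulton--Woodward non-vanishing to rule out nilpotency and apply Theorem~\ref{square}. The only cosmetic difference is emphasis: the paper isolates the single term $(\dim_\Lambda QH_*)^2\,[\mathrm{pt}]*[\mathrm{pt}]$ inside $e_q^2$ and cites Fulton--Woodward's Theorem~9.1 specifically to get $[\mathrm{pt}]*[\mathrm{pt}]\ne 0$ (this concrete form is reused later for the Hofer--Zehnder bound), whereas you phrase the same step as ``the positive cone is closed under $*$'' using the general non-vanishing of $\sigma_u*\sigma_v$.
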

\begin{proof}
By Theorem \ref{fieldfactor}, the quantum cohomology algebra
$QH_*(G_{\mathbb{C}}/P)$ contains a field factor in its direct sum
decomposition if and only if the square of the quantum Euler class
$$
e_q=\sum_{w\in W/W_P}\sigma_w*\check{\sigma}_{w}
$$
is nonzero.
Note that the Euler class $e_q$ is of the form
$$
e_q=\dim_{\Lambda}{QH_*(G_{\mathbb{C}}/P)}[\operatorname{pt}]+\sum_{\substack{u\in
W/W_P \\ u\ne e}}\Bigl(\sum_{k \in \mathbb{Z}_{\geq
0}}q^kc_{k,u}\Bigr)\sigma_u,
$$
where $c_{k,u}$ are nonnegative integer numbers. Thus $e_q*e_q\ne
0,$ if for instance
$$
[\operatorname{pt}]*[\operatorname{pt}]\ne 0,
$$
or if there exists a Schubert cycle $\sigma_{u}$ and some degree
$c\in H_2(G_{\mathbb{C}}/P, \Z)$  so the Gromov-Witten invariant
$\operatorname{GW}_{c}([\operatorname{pt}], [\operatorname{pt}],
\sigma_{u})$ is non-zero.

Let $w_0$ be the longest element in $W.$ If we denote by $d$ the
degree of any $T$-invariant curve joining the coset $P/P$ with the
coset $w_0P/P$ in $G_{\mathbb{C}}/P,$ by the Theorem 9.1 of Fulton
and Woodward \cite{fultonw}, there exists a degree $c\leq d$ and a
Schubert class $\sigma_u$ so the Gromov-Witten invariant
$\operatorname{GW}_{c}([\operatorname{pt}], [\operatorname{pt}],
\sigma_{u})$ is nonzero, and we are done.
\end{proof}
As a consequence of the last result and Theorem
\ref{entovpolterovich}, we obtain the following theorem
\begin{teor}\label{quasi}
Let $(\mathcal{O}_\lambda, \w_\lambda)$ be a monotone coadjoint
orbit of a compact Lie group. Then there exists a homogeneous Calabi
quasimorphism on
$\widetilde{\operatorname{Ham}}(\mathcal{O}_\lambda, \w_\lambda).$
\end{teor}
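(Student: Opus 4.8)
The plan is to obtain Theorem \ref{quasi} as an immediate consequence of Theorem \ref{castro} and Theorem \ref{entovpolterovich}; the only work is to check that a monotone coadjoint orbit meets the hypotheses of the latter. First I would record that, under the identification $(\mathcal{O}_\lambda, \w_\lambda)\cong (G_{\mathbb{C}}/P, \w_\lambda)$, the space $\mathcal{O}_\lambda$ is a closed, connected and simply connected symplectic manifold: it is a compact homogeneous space, and $G_{\mathbb{C}}/P$ carries a Bruhat decomposition into $B$-orbits $BwP/P\cong \mathbb{A}^{\ell(w)}$, all of even real dimension, so it has a CW structure with no odd cells. Moreover $\w_\lambda$ is a K\"ahler form, so it does not vanish on $\pi_2(\mathcal{O}_\lambda)$, and since $G_{\mathbb{C}}/P$ is simply connected the Hurewicz map $\pi_2(\mathcal{O}_\lambda)\to H_2(\mathcal{O}_\lambda, \Z)$ is surjective. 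Hence the monotonicity condition $\lambda=\frac{1}{\kappa}\sum_{\gamma\in R^{+}\backslash R^+_P}\gamma$ used for coadjoint orbits, which amounts to $c_1(T_{G_{\mathbb{C}}/P})=\kappa[\w_\lambda]$ in $H^2$, is exactly spherical monotonicity as used in the quantum homology setup. Thus $(\mathcal{O}_\lambda,\w_\lambda)$ is a closed monotone symplectic manifold of the kind to which Theorem \ref{entovpolterovich} applies, provided we exhibit a field summand in its quantum homology.

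Next I would use that, by the same Bruhat decomposition, $H_*(G_{\mathbb{C}}/P)$ is concentrated in even degrees, so $QH_*(G_{\mathbb{C}}/P)=QH_{\operatorname{ev}}(G_{\mathbb{C}}/P)$ as Frobenius algebras over $\Lambda$; in particular $QH_*(G_{\mathbb{C}}/P)$ is commutative, so that Theorems \ref{fieldfactor} and \ref{square} are applicable to it. By Theorem \ref{castro}, $QH_{\operatorname{ev}}(\mathcal{O}_\lambda)$ contains a field factor in its direct sum decomposition, i.e.\ as an algebra $QH_{\operatorname{ev}}(\mathcal{O}_\lambda)=\mathcal{F}\oplus\mathcal{R}$ with $\mathcal{F}$ a field extension of $\Lambda$ and $\mathcal{R}$ a complementary Frobenius subalgebra.

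Finally I would invoke Theorem \ref{entovpolterovich} with this splitting: taking $e$ to be the unit of $\mathcal{F}$ and $c_e(\phi)=c(e,\phi)$ the associated spectral invariant, the homogenization $\mu(\phi)=\lim_{m\to\infty} c_e(\phi^m)/m$ is a homogeneous Calabi quasimorphism on $\widetilde{\operatorname{Ham}}(\mathcal{O}_\lambda,\w_\lambda)$, which is exactly the assertion. The substance of the argument lies entirely upstream, in Theorem \ref{castro} and hence in the positivity of genus-zero Gromov--Witten invariants together with the non-vanishing of quantum products of Schubert classes; the only genuine checks in this last step are the two compatibilities above, namely that ``monotone'' for $\mathcal{O}_\lambda$ coincides with ``spherically monotone'' and that $QH_*=QH_{\operatorname{ev}}$, and I expect these --- though routine --- to be the main point requiring care. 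I would also stress that this final step does not require semisimplicity of $QH_*(\mathcal{O}_\lambda)$, so the theorem covers coadjoint orbits whose quantum homology behaves like that of $IG(2,6)$, where Abrams' semisimplicity criterion fails.
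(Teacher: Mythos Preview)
Your proposal is correct and follows exactly the route the paper takes: Theorem~\ref{quasi} is stated there as an immediate consequence of Theorem~\ref{castro} together with Theorem~\ref{entovpolterovich}, with no further argument. The compatibility checks you spell out (spherical monotonicity via simple connectivity of $G_{\mathbb{C}}/P$, and $QH_*=QH_{\operatorname{ev}}$ from the even-cell Bruhat decomposition) are left implicit in the paper, so your write-up is, if anything, more detailed than the original.
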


\begin{rk}
In this paper we do not consider the question of whether the
quasimorphism given by Theorem \ref{quasi} descends to
$\operatorname{Ham}(\mathcal{O}_\lambda, \w).$  This is the same as
showing that the Calabi quasimorphism vanishes on
$\pi_1(\operatorname{Ham}(\mathcal{O}_\lambda, \w_\lambda)).$ The
Calabi quasimorphism restricted to
$\pi_1(\operatorname{Ham}(\mathcal{O}_\lambda, \w_\lambda))$ can be
written in terms of the Seidel representation (see e.g. Entov and
Polterovich \cite{calabi}, McDuff \cite{monodromy}), but it is a
difficult problem to compute the Seidel representation. However,
partial results have been obtained in this direction. Entov and
Polterovich showed that the Calabi quasimorpshims described in
Theorem \ref{quasi} descends when the coadjoint orbit
$\mathcal{O}_\lambda$ is isomorphic to a projective space
\cite{calabi}. Also, Branson showed that the Calabi quasimorphism
descends when the coadjoint orbit is isomorphic to the Grassmannian
manifold $G(2, 4)$ of 2-planes in $\C^4$ \cite{branson}. Branson
showed that the Seidel elements in $G(2, 4)$ have \textit{``finite
order''} and the quantum homology algebra $QH_*(G(2, 4))$ has
\textit{McDuff's ``property D''}; according to McDuff
\cite{monodromy}, when these two properties hold simultaneously, the
Calabi quasimorphism descends. Unfortunately, Branson also showed
that the projective spaces and the Grassmannian manifold $G(2, 4)$
are the only Grassmannian manifolds  satisfying property D. So even
the problem of deciding when the Calabi quasimorphism of a
Grassmannian manifold descends to
$\operatorname{Ham}(\mathcal{O}_\lambda, \w_\lambda)$ is still open.
\end{rk}

\section{Upper bound for the Hofer-Zehnder capacity}

In this section we point out that part of the argument given in the
proof of Theorem \ref{castro} can be used to estimate from above the
Hofer-Zenhder capacity of a coadjoint orbit of a compact Lie group.

Let $(M, \w)$ be a closed symplectic manifold. The oscillation of a
Hamiltonian function $H:M\to \R$ is
$$
\operatorname{osc}{H}:=\max{H}-\min{H}
$$
A Hamiltonian function $H:M\to \R $ is ($\pi_1$) admissible if all
(contractible) periodic orbits of the Hamiltonian vector field $X_H$
of period $< 1$ are constant.

The Hofer-Zehnder capacity of a closed symplectic manifold $(M, \w)$
may be expressed as
$$
\operatorname{c_{HZ}}(M, \w)=\sup{\bigl \{\operatorname{osc}{H}:
H:M\to\R \text{ admissible} \bigr\}},
$$
and the $\pi_1$-sensitive Hofer-Zehnder capacity of a closed
manifold $(M, \w)$ as
$$
\operatorname{c_{HZ}^\circ}(M, \w)=\sup{\bigl
\{\operatorname{osc}{H}: H:M\to\R \text{ $\pi_1$-admissible}
\bigr\}}
$$
The following theorem is due to G. Lu \cite{Lu}
\begin{teor}
Suppose that $(M, \w)$ admits a nonzero Gromov-Witten invariant of
the form
$$
\operatorname{GW}_{d}([\operatorname{pt}], [\operatorname{pt}],
A,\ldots, B)
$$
where $d\in H_2(M, \Z)/\text{torsion}$ and $A,\ldots, B$ are
rational homology classes of even degree. Then
$$
\operatorname{c_{HZ}^\circ}(M, \w)\leq \w(d\,),
$$
in particular
$$
\operatorname{c_{HZ}}(M, \w)\leq \w(d\,).
$$
\end{teor}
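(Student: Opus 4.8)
The plan is to recover this by the Hofer--Viterbo pseudoholomorphic curve argument, in the form adapted to Gromov--Witten invariants by Liu--Tian and by Lu.

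First I would reduce the statement to a purely dynamical one. By the variational description of the Hofer--Zehnder capacities recalled above, it suffices to show that every autonomous Hamiltonian $H:M\to\R$ with $\max H-\min H>\w(d)$ has a non-constant contractible periodic orbit of $X_H$ of period $\le 1$. Indeed, since $X_{\lambda H}=\lambda X_H$, a non-constant periodic orbit of $X_{\lambda H}$ of period $T$ is a reparametrization of one of $X_H$ of period $\lambda T$; choosing $\lambda<1$ close enough to $1$ that $\operatorname{osc}(\lambda H)=\lambda(\max H-\min H)$ still exceeds $\w(d)$ --- possible because the inequality for $H$ is strict --- and applying the statement to $\lambda H$ produces a non-constant contractible periodic orbit of $X_H$ of period $\le\lambda<1$, so $H$ is not $\pi_1$-admissible. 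Passing to the supremum over $\pi_1$-admissible Hamiltonians gives $\operatorname{c_{HZ}^\circ}(M,\w)\le\w(d)$, and the inequality $\operatorname{c_{HZ}}(M,\w)\le\operatorname{c_{HZ}^\circ}(M,\w)$ then finishes the proof.

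I would prove the dynamical statement by contradiction: suppose $\max H-\min H>\w(d)$ while $X_H$ has no non-constant contractible periodic orbit of period $\le 1$. Following Hofer--Viterbo, replace $H$ by a normal form $K=g\circ H$ ($g$ non-decreasing, $\|g'\|_\infty\le 1$, with short plateaus near $\min H$ and near $\max H$): then $\operatorname{osc}(K)$ still exceeds $\w(d)$, $K$ vanishes on a non-empty open set $U$ and equals a large constant on a non-empty open set $V$, and --- since $g'\le 1$ --- $K$ still has no non-constant contractible periodic orbit of period $\le 1$. The heart of the matter is to exploit $\operatorname{GW}_d([\operatorname{pt}],[\operatorname{pt}],A,\ldots,B)\ne 0$: it guarantees, for generic $J$, an abundance of $J$-holomorphic curves in class $d$ through two prescribed generic points (equivalently, it controls the $q$-valuation of the quantum self-product $[\operatorname{pt}]*[\operatorname{pt}]$), and I would feed these into a PSS-type moduli space of genus-zero configurations in class $d$ whose two distinguished marked points, sent to generic $x^-\in U$ and $x^+\in V$, carry Floer spikes governed by a Hamiltonian interpolating from $0$ to $K$, with further marked points constrained to generic representatives of $A,\ldots,B$. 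When $K\equiv 0$ the signed count of this moduli space equals $\operatorname{GW}_d([\operatorname{pt}],[\operatorname{pt}],A,\ldots,B)\ne 0$, and turning $K$ on is a continuation, so the count persists provided no new stratum appears in the Gromov--Floer compactification; an a priori energy bound of the shape $E\le\w(d)+\operatorname{osc}(K)$ confines the possible degenerations to sphere bubbling (excluded by the genericity of $x^\pm$ and of the representatives of $A,\ldots,B$) and to breaking of a Floer spike, and breaking along a non-constant contractible periodic orbit is excluded by hypothesis. One is thus left with a surviving solution that must, given the absence of short periodic orbits, connect the vanishing region $U$ to the tall plateau $V$ using only curve pieces of total $\w$-energy $\le\w(d)$ together with constant (critical-point) asymptotics --- and a Hofer--Viterbo linking/action estimate shows this is impossible once $\operatorname{osc}(K)>\w(d)$. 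This contradiction proves the dynamical statement, hence the theorem.

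The hard part will be exactly this last step: arranging transversality for the $K$-deformed PSS moduli spaces and controlling the Gromov--Floer compactification precisely enough for the energy and action/linking estimates that exclude the candidate boundary strata. In full generality this needs the virtual moduli cycle technology of Liu--Tian, Lu and Fukaya--Ono, but in the situation of this paper $M=\mathcal{O}_\lambda\cong G_{\mathbb{C}}/P$ is Fano, hence semipositive, so classical transversality for a generic compatible $J$ suffices and one may work with honest moduli spaces, which simplifies the analysis considerably. The strict oscillation inequality, and the presence of \emph{two} point constraints --- through which the class $d$ is detected, exactly as in $[\operatorname{pt}]*[\operatorname{pt}]$ --- are what make the exclusion of the degenerate strata, and hence the whole argument, go through.
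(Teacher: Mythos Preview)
The paper does not prove this theorem at all: it is quoted as a result ``due to G.\ Lu \cite{Lu}'' and used as a black box, with a remark that Usher \cite{usher} later gave a Hamiltonian Floer theoretic proof. There is therefore no proof in the paper to compare your proposal against.

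That said, your sketch is broadly in the spirit of the Hofer--Viterbo/Liu--Tian/Lu line of argument you cite, and the reduction in your first paragraph is clean. The second paragraph, however, is more of a narrative than a proof: the ``Hofer--Viterbo linking/action estimate'' you invoke at the end is exactly the step that carries all the content, and you have not actually stated what inequality is being proved or why the absence of short contractible orbits forces it. In Lu's actual argument (and in Usher's Floer-theoretic version) this is where the real work lies --- one needs a precise action identity relating the symplectic area of the curve components, the Hamiltonian term, and the actions of the asymptotic orbits, and then one uses the two point constraints to pin down the asymptotics. Your sketch gestures at this but does not supply it. Also, your closing remark that semipositivity of $G_{\mathbb{C}}/P$ lets one avoid virtual techniques is true for the application in this paper, but the theorem as stated is for arbitrary closed $(M,\omega)$, so a complete proof at that generality does require the virtual machinery.

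In short: there is nothing to compare to in the paper, and your outline is plausible but would need the action/energy estimate made explicit before it counts as a proof.
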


\begin{rk}
In \cite{usher}, Usher has provided a  Hamiltonian Floer Theory
proof of the last theorem. The tools that Usher developed in
\cite{usher} were also used by him to show a criterion for $(M, \w)$
to admit a Calabi quasimorphism. This criterion is among the same
lines as Entov and Polterovich's Theorem \ref{entovpolterovich}.
\end{rk}
For a coadjoint orbit $\mathcal{O}_\lambda \cong G_{\mathbb{C}}/P,$
let $w_0$ be the longest element in $W$. If we denote by $d$ the
degree of any $T$-invariant curve joining $P/P$ with $w_0P/P,$ by
the Theorem 9.1 of Fulton and Woodward \cite{fultonw}, there exists
a degree $c\leq d$ and a Schubert class $\sigma_u$ such that
$$
\operatorname{GW}_{c}([\operatorname{pt}], [\operatorname{pt}],
\sigma_{u})\ne 0.
$$
A $T$-invariant curve of a coadjoint orbit $\mathcal{O}_\lambda
\cong G_{\mathbb{C}}/P$ can be described in terms of its GKM-graph.
The GKM graph of $\mathcal{O}_\lambda$ is the graph whose vertices
are its $T$-fixed points and the edges are its irreducible
$T$-invariant curves. The collection of points $wP/P$ for $w\in W$
is the set of all $T$-fixed points in $G_{\mathbb{C}}/P.$ For each
positive root $\alpha \in R^+\backslash R^+_P$ there is a unique
irreducible $T$-invariant curve $C_\alpha$ that contains $P/P$ and
$s_\alpha P/P.$ Any other irreducible $T$-invariant curve is of the
form $w\cdot C_\alpha$ for some $w\in W.$  Any $T$-invariant curve
is a connected tree of irreducible $T$-invariant curves (see e.g.
Fulton and Woodward \cite{fultonw}).

All these remarks and its consequences regarding Hofer-Zehnder are
summarized in the following theorem
\begin{teor}
Let $\mathcal{O}_\lambda$ be a coadjoint orbit of a compact Lie
group $G,$ and let $\w_\lambda$ be the Kostant-Kirillov-Souriu form
defined on it. Then
$$
\operatorname{c_{HZ}}(\mathcal{O}_\lambda, \w_\lambda)\leq \min_A\,
\w_\lambda(A),
$$
where the minimum is taken over all degrees $A\in
H_2(\mathcal{O}_\lambda, \Z)$ of $T$-invariant curves joining $P/P$
with $w_0P/P$ in the GKM-graph.
\end{teor}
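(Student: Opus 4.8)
The plan is to assemble the two cited inputs --- G. Lu's capacity inequality \cite{Lu} and Theorem~9.1 of Fulton and Woodward \cite{fultonw} --- in exactly the way the argument for Theorem~\ref{castro} was run, except that the nonzero Gromov--Witten invariant now gets fed into Lu's bound rather than into $e_q*e_q.$ I would fix a coadjoint orbit $\mathcal{O}_\lambda\cong G_{\mathbb{C}}/P,$ let $w_0$ be the longest element of $W,$ and let $A\in H_2(G_{\mathbb{C}}/P,\Z)$ be the degree of an arbitrary $T$-invariant curve joining $P/P$ with $w_0P/P$ (such curves exist because the GKM graph is connected and every $T$-invariant curve is a connected tree of the irreducible curves $w\cdot C_\alpha$). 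The goal then reduces to showing $\operatorname{c_{HZ}}(\mathcal{O}_\lambda,\w_\lambda)\le \w_\lambda(A)$ for every such $A,$ after which the stated inequality is just the minimum over all these degrees.

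First I would invoke Fulton and Woodward's Theorem~9.1 to produce a degree $c\le A$ and a Schubert class $\sigma_u$ with $\operatorname{GW}_c([\operatorname{pt}],[\operatorname{pt}],\sigma_u)\ne 0.$ Then I would check that the hypotheses of G. Lu's theorem are met: $[\operatorname{pt}]$ and $\sigma_u$ are rational homology classes of even degree, since $G_{\mathbb{C}}/P$ has a Schubert cell decomposition with only even-dimensional cells, and $c$ lies in $H_2(G_{\mathbb{C}}/P,\Z),$ which is torsion free (it has the $\sigma_{s_\alpha}$ as a $\Z$-basis). Applying G. Lu's inequality with $d=c$ then yields $\operatorname{c_{HZ}}(\mathcal{O}_\lambda,\w_\lambda)\le\operatorname{c_{HZ}^\circ}(\mathcal{O}_\lambda,\w_\lambda)\le \w_\lambda(c).$

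The remaining step is to pass from $\w_\lambda(c)$ to $\w_\lambda(A),$ and this is where I would use that $c\le A$ in the effective-cone order, i.e. $A-c=\sum_{\alpha\in S\setminus S_P}b_\alpha\sigma_{s_\alpha}$ with all $b_\alpha\in\Z_{\ge 0}.$ Because $(\mathcal{O}_\lambda,\w_\lambda,J)$ is a K\"ahler manifold, $[\w_\lambda]$ is a K\"ahler class and hence $\w_\lambda(\sigma_{s_\alpha})=\int_{\sigma_{s_\alpha}}[\w_\lambda]>0$ for every $\alpha\in S\setminus S_P$ (these are classes of honest holomorphic curves). Thus $\w_\lambda(A)-\w_\lambda(c)=\sum_\alpha b_\alpha\,\w_\lambda(\sigma_{s_\alpha})\ge 0,$ so $\operatorname{c_{HZ}}(\mathcal{O}_\lambda,\w_\lambda)\le\w_\lambda(c)\le\w_\lambda(A);$ minimizing over all degrees $A$ of $T$-invariant curves joining $P/P$ with $w_0P/P$ finishes the argument.

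I do not expect a genuine obstacle here: the substance is carried entirely by the two cited theorems, and the only points needing attention are bookkeeping ones --- matching the parity requirement in G. Lu's inequality (automatic for point and Schubert classes) and getting the direction of the partial order on $H_2(G_{\mathbb{C}}/P,\Z)$ right, so that the domination $c\le A$ furnished by Fulton and Woodward does translate, via K\"ahler positivity on the primitive Schubert curve classes, into $\w_\lambda(c)\le\w_\lambda(A).$
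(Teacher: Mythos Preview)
Your proposal is correct and follows precisely the route the paper takes: the paper does not give a separate proof but simply records the theorem as a summary of the preceding discussion, which combines Fulton--Woodward's Theorem~9.1 with G.~Lu's inequality in exactly the way you describe. Your write-up is in fact more explicit than the paper on the bookkeeping points (torsion-freeness of $H_2$, even degree of Schubert classes, and the K\"ahler-positivity step $\w_\lambda(c)\le\w_\lambda(A)$ from $c\le A$), all of which are handled correctly.
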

The following Example illustrates the previous theorem for regular
coadjoint orbits of $U(n).$
\begin{example}
Let $\lambda=(\lambda_1 > \lambda_2 > \ldots > \lambda_{n-1} >
\lambda_n)\in \R^n,$ and
$$
\mathcal{H}_\lambda:=\{A\in \mathfrak{u}(n): A^*=-A,\,
\operatorname{spectrum}{A}=-\lambda\}
$$
We can identify the set of skew-Hermitian matrices
$\mathcal{H}_\lambda$ with a regular coadjoint orbit of $U(n)$ via
the pairing
\begin{align*}
\mathfrak{u}(n)\times \mathfrak{u}(n) &\to \R \\ (X,
Y)&\mapsto\operatorname{Trace}(XY)
\end{align*}
Let $T=U(1)^n\subset U(n)$ be the maximal torus of diagonal matrices
in $U(n).$ The corresponding system of roots associated with the
torus $T$ is the set of vectors $\{e_i-e_j:i\ne j\}\subset
\mathfrak{t}\cong \mathfrak{t}^*.$

Any $T$-fixed point of $\mathcal{H}_\lambda$ with respect to the
conjugation action is a permutation of the diagonal matrix
$i(\lambda_1, \ldots, \lambda_n).$ Two $T$-fixed points of
$\mathcal{H}_\lambda$ are joined by one irreducible $T$-invariant
curve if they differ by one transposition.

In order to find an upper for the Hofer Zehnder capacity of
$\mathcal{H}_\lambda,$ we want to find a chain of irreducible
$T$-invariant curves joining the two $T$-fixed points $i(\lambda_1,
\lambda_2, \ldots, \lambda_n)$ and $i(\lambda_n, \lambda_{n-1},
\ldots, \lambda_1).$ Let us consider the following chain of
irreducible $T$-invariant curves
\begin{align*}
i(\lambda_1, \lambda_2, \ldots, \lambda_{n-1}, \lambda_n)
&\xrightarrow{(1,\, n)} i(\lambda_n, \lambda_2, \ldots,
\lambda_{n-1}, \lambda_1)\\ &\xrightarrow{(2,\, n-1)} \ldots \to
i(\lambda_{n}, \lambda_{n-1}, \ldots, \lambda_2, \lambda_1),
\end{align*}
The degree of this chain is equal to
$\sum_{k=1}^{[\frac{n-1}{2}]}[C_{(k\,, n-k)}]$ and its symplectic
area is equal to
$$
\w_{\lambda}\Bigl(\sum_{k=1}^{[\frac{n-1}{2}]}[C_{(k\,, n-k)}]
\Bigr)=\sum_{k=1}^{[\frac{n-1}{2}]}\langle\lambda,
\check{\alpha}_{k,\,n-k+1}\rangle=
\dfrac{1}{2}\sum_{k=1}^n|\lambda_k-\lambda_{n-k+1}|,
$$ where $\alpha_{k,\,n-k+1}$ denotes the root $e_k-e_{n-k+1},$
and thus
$$
\operatorname{c_{HZ}}(\mathcal{H}_\lambda, \w_\lambda)\leq
\dfrac{1}{2}\sum_{k=1}^n|\lambda_k-\lambda_{n-k+1}|
$$
Now we show that this inequality is sharp by constructing an
admissible Hamiltonian function $H:\mathcal{H}_\lambda \to \R$ whose
oscillation is equal to the right hand side of the last inequality.

The conjugation action of the torus $T$ on $\mathcal{H}_\lambda$ is
Hamiltonian with moment map given by
\begin{align*}
\mu: \mathcal{H}_\lambda &\to \R^n \\
A=(a_{ij}) &\to \operatorname{diagonal}(-iA)=(a_{11}, \ldots,
a_{nn})
\end{align*}
The image of the moment map $\mu$ is the convex hull of all possible
permutations of the vector $(\lambda_1, \ldots, \lambda_n)\in \R^n$
(see, e.g. Guillemin \cite{guillemin}).

For $t\in U(1)$ and $(m_1, \ldots, m_n)\in \Z^n,$ we will use the
convention that
$$
t^{(m_1, \ldots, m_n)}:=(t^{m_1}, \ldots, t^{m_n})\in
T=U(1)^n\subset U(n).
$$
Let
$$
\beta=\sum_{k=1}^{[\frac{n-1}{2}]}(e_k-e_{n-k+1})
$$
and $S=\{t^\beta=(t, t, \ldots, t^{-1}, t^{-1}):t\in S^1\}\subset
T.$ The action of the circle $S$ on $\mathcal{H}_\lambda$ is
Hamiltonian with moment map given by
\begin{align*}
\tilde{\mu}:\mathcal{H}_\lambda &\to \R \\
A=(a_{ij}) &\mapsto (\mu(A),
\beta)=a_{1,1}-a_{n,n}+a_{2,2}-a_{n-1,n-1}+\ldots
\end{align*}
The moment map image of $\tilde{\mu}$ is the interval
$$
\Bigl[-\dfrac{1}{2}\sum_{k=1}^n|\lambda_k-\lambda_{n-k+1}|,\dfrac{1}{2}\sum_{k=1}^n|\lambda_k-\lambda_{n-k+1}|\Bigr]\subset
\R,
$$
and thus the oscillation of $\tilde{\mu}$ is equal to
$\sum_{k=1}^n|\lambda_k-\lambda_{n-k+1}|.$ Unfortunately, the
function $\tilde{\mu}$ is not admissible. This is because, under the
action of $S$ on $\mathcal{H}_\lambda,$ there are elements in
$\mathcal{H}_\lambda$ with non-trivial finite stabilizers. All
possible stabilizer subgroups of $S$ are either $\{1\}, \Z_2$ or
$S.$ When the stabilizer subgroup of a skew-Hermitian matrix in
$\mathcal{H}_\lambda$ is $\Z_2,$ the period of the orbit passing
through the skew-Hermitian matrix is $1/2.$ Otherwise, the
skew-Hermitian matrix is either a $S$-fixed point or the period of
the orbit passing through the skew-Hermitian matrix is $1.$

The Hamiltonian function
$H=\dfrac{1}{2}\tilde{\mu}:\mathcal{H}_\lambda\to \R$ fixes this
problem. The orbits of $H$ are either constant or their periods are
either 1 or 2. So, $H$ is admissible, and
$$
\operatorname{osc}(H)=\dfrac{1}{2}\sum_{k=1}^n|\lambda_k-\lambda_{n-k+1}|
\leq c_{\operatorname{HZ}}(\mathcal{H}_\lambda, \w_\lambda)
$$
In conclusion,
$$
\operatorname{c_{HZ}}(\mathcal{H}_\lambda, \w_\lambda)=
\dfrac{1}{2}\sum_{k=1}^n|\lambda_k-\lambda_{n-k+1}|
$$
\end{example}

\renewcommand{\refname}{Bibliography}
\bibliographystyle{plain}
\bibliography{bibloquantumeuler}
\nocite{*}

\end{document}